\tikzstyle{vertex}=[circle, draw, inner sep=2pt, minimum size=6pt]
\def\noi{\noindent}
\def\firstpage{13}
\begin{document}
{\fontfamily{cmss}\selectfont

\titlefigurecaption{\hspace{0.3cm}{\hspace{0.6cm}\LARGE \bf \sc \sffamily\color{white} Contemporary Studies in Discrete Mathematics}}

\title{\sc \sffamily On Chromatic Core Subgraph of Simple Graphs}

\author{\sc\sffamily Johan Kok$^{1,\ast}$, Sudev Naduvath$^2$, Jerlin Seles$^3$ and U Mary$^4$}
\institute {$^{1,2}$Centre for Studies in Discrete Mathematics, Vidya Academy of Science \& Technology, Thrissur, India. \\
$^{3,4}$Department of Mathematics, Nirmala College for Women, Coimbatore, India.}

%--- The asteric symbol indicates the corresponding author.

\titlerunning{ On Chromatic Core Subgraph of Simple Graphs}
\authorrunning{J.Kok, S.Naduvath, J. Seles \& U. Mary}

\mail{kokkiek2@tshwane.gov.za}

\received{5 July 2017}
\revised{18 December 2017}
\accepted{21 January 2018}
\published{23 February 2018.}

\abstracttext{ If distinct colours represent distinct technology types that are placed at the vertices of a simple graph in accordance to a minimum proper colouring, a disaster recovery strategy could rely on an answer to the question: ``What is the maximum destruction, if any, the graph (a network) can undergo while ensuring that at least one of each technology type remain, in accordance to a minimum proper colouring of the remaining induced subgraph." In this paper, we introduce the notion of a chromatic core subgraph $H$ of a given simple graph $G$ in answer to the stated problem. Since for any subgraph $H$ of $G$ it holds that $\chi(H) \leq \chi(G)$, the problem is well defined. }
\keywords{Chromatic colouring, chromatic core subgraph, structural size, structor index.}

\msc{05C15, 05C38, 05C75, 05C85.}
\maketitle

\section{Introduction}

For general notation and concepts in graphs and digraphs see \cite{3,6,10}. Unless mentioned otherwise all graphs $G$ are finite, undirected simple graphs. The number of vertices of a  graph $G$ is called its \textit{order} and is denoted by $\nu(G) = n \geq 1$ and the number of edges of edges of $G$ is called its \textit{size}, denoted by $\varepsilon(G)= p \geq 0$. The minimum and maximum degree of $G$ are denoted by $\delta(G)$ and $\Delta(G)$ respectively. The degree of a vertex $v \in V(G)$ is denoted by $d_G(v)$ or when the context is clear, simply as $d(v)$. Also, the vertices and the edges of a graph are together called the \textit{elements} of a graph.

Recall that a \textit{proper vertex colouring} of a graph $G$ denoted $\varphi:V(G) \mapsto \mathcal{C}=\{c_1,c_2,c_3,\dots,c_\ell\}$, a set of distinct colours, is a vertex colouring such that no two distinct adjacent vertices have the same colour. The minimum number of colours required in a proper vertex colouring of a graph $G$ is called the \textit{chromatic number} of $G$ and is denoted by $\chi(G)$. When a vertex colouring is considered with colours of minimum subscripts the colouring is called a \textit{minimum parameter colouring}. Unless stated otherwise we consider minimum parameter colour sets throughout this paper.

\section{Chromatic Core Subgraph}

If distinct colours represent distinct technology types that are placed at the vertices of a simple graph in accordance to a minimum proper colouring, a disaster recovery strategy could rely on an answer to the question about the maximum destruction the graph (a network) can undergo while ensuring that at least one of each technology type remain in accordance to a minimum proper colouring of the remaining induced subgraph. In view of this question, we introduce the notion of a chromatic core subgraph $H$ of a given simple graph $G$ in answer to the stated problem.

Observe that the notion of a chromatic core subgraph $H$ of a given simple graph $G$ resembles the technique used in \cite{10}, (see Theorem 8.6.19 of \cite{10}) in which, for $\chi(G)=k$, the successive deletion of vertices from graph $G$ are utilised to obtain a subgraph $H$ such that $\chi(H-v) = k-1$. The essential difference is the fact that a chromatic core graph is an induced subgraph with minimum \textit{structor index} $si(G) = \nu(G) + \varepsilon(G)$.

The \textit{structural size} of a graph $G$ is measured in terms of \textit{structor index}. If $si(G)=si(H)$ the graphs are of equal structural size but not necessarily isomorphic. We say that a graph among $G$ and $H$ is the smaller when compared to the other if its structor index is smaller than that of the latter. 

\begin{definition}\label{Defn-2.1}{\rm 
A \textit{chromatic core subgraph} of a finite, undirected and simple graph $G$ of order $\nu(G)=n\geq 1$ is a smallest induced subgraph, say $H$, (smallest in terms of $si(H)$) such that $\chi(H) = \chi(G)$.
}\end{definition}

Note that, up to isomorphism, a chromatic core subgraph is not necessarily unique. Definition \ref{Defn-2.1} essentially requires only the conditions $\min(\nu(H)+\varepsilon(H))$ and that $\chi(H) = \chi(G)$. It is easy to see that a graph $G$ with $\chi(G)=1$ can only be a null graph (edgeless graph). Hence, $K_1$ is a chromatic core subgraph of any graph $G$ with $\chi(G)=1$. Also, for all graphs $G$ with $\chi(G)=2$ (bipartite graphs), any edge $uv \in E(G)$ is a chromatic core subgraph of $G$. Therefore, the bipartite graphs have $\varepsilon$ chromatic core subgraphs.

From a comp\^{o}nent\u{a} analysis perspective (see \cite{8}), a finite, simple graph may have a number of connected components say, $H_1, H_2, H_3,\dots, H_k$ such that $\bigcup\limits_{i=1}^{k}V(H_i) = V(G)$ and $\bigcup\limits_{i=1}^{k}E(H_i) = E(G)$. Renumber the components in decreasing order of the chromatic number of the components. Assume that $\chi(H_1) = \chi(H_2) =\cdots = \chi(H_t) > \chi(H_{t+1})\geq \chi(H_{t+2})\geq \cdots \geq \chi(H_{k})$. Definition \ref{Defn-2.1} implies that if $H'_1, H'_2, H'_3,\dots, H'_t$ are chromatic core subgraphs of $H_1, H_2, H_3,\ldots, H_t$ respectively, then a smallest $\{H'_i:1\leq i \leq t\}$ is a chromatic core subgraph of $G$. It means that we can sensibly investigate only finite, undirected connected simple graphs henceforth. Then, the first important existence lemma can be presented as given below.

\begin{lemma}\label{Lem-2.1}
Any finite, undirected connected simple graph $G$ has a proper subgraph $H$ such that $\chi(H) = \chi(G)$ if and only if $G$ is neither a complete graph nor an odd cycle.
\end{lemma}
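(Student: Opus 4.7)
The plan is to establish the biconditional by handling the two implications separately, with the backward direction split into cases on $\chi(G)$.

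For the forward direction, I would argue by contrapositive: if $G = K_n$ or $G = C_{2k+1}$, then no proper subgraph $H$ can satisfy $\chi(H) = \chi(G)$. For $G = K_n$, deleting any vertex gives $K_{n-1}$ with chromatic number $n-1$, and deleting any edge $uv$ produces a graph on $n$ vertices in which $u$ and $v$ can share a colour, yielding an $(n-1)$-colouring; any proper subgraph is contained in one of these single-deletion graphs and hence has $\chi(H) \leq n-1 < n$. For $G = C_{2k+1}$, any proper subgraph is a disjoint union of paths, so $\chi(H) \leq 2 < 3 = \chi(G)$.

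For the backward direction, assume $G$ is connected, not complete, and not an odd cycle. The case $\chi(G) = 1$ is excluded since the only connected simple graph with $\chi = 1$ is $K_1$, which is complete. If $\chi(G) = 2$, then $G$ is bipartite with at least three vertices (because $G \neq K_2$), and any edge $uv \in E(G)$ induces a proper subgraph $H \cong K_2$ with $\chi(H) = 2$. The substantive case is $\chi(G) \geq 3$, where the plan is to exhibit a vertex $v$ with $\chi(G-v) = \chi(G)$, so that $H = G - v$ is the required induced proper subgraph. Here Brooks' theorem is the central tool: since $G$ is connected and avoids both forbidden structures, $\chi(G) \leq \Delta(G)$. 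Picking a vertex $v$ of minimum degree (so $d(v) \leq \Delta(G) - 1$, using that $G$ is not complete) and starting from a minimum proper colouring of $G$, I would argue via a Kempe-chain recolouring around $v$ that $G - v$ still forces $\chi(G)$ colours.

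The main obstacle is precisely this last step for $\chi(G) \geq 3$: showing that the hypothesis on $G$ rules out ``full'' criticality and guarantees the existence of a flexible vertex $v$ with $\chi(G-v) = \chi(G)$. I expect the cleanest resolution to combine Brooks' theorem with a Kempe-chain swap on the colour classes containing the neighbours of $v$, exploiting the fact that $d(v) < \chi(G)$ would contradict criticality, while $d(v) \geq \chi(G)$ forces two neighbours of $v$ to share a colour, leaving a surplus of colour classes with which to realise $\chi(G)$ on $G-v$. If a direct vertex-deletion argument stalls, the fallback is to delete an edge instead: any edge $e$ of $G$ joining two vertices in the same Kempe component of a $\chi(G)$-colouring will yield a spanning proper subgraph $H = G - e$ with $\chi(H) = \chi(G)$, and the hypothesis should guarantee at least one such edge.
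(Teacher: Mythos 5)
Your forward direction and the cases $\chi(G)\le 2$ are fine, and for $\chi(G)=3$ the claim does hold (the $3$-critical graphs are exactly the odd cycles, so a connected graph with $\chi=3$ that is not an odd cycle cannot be critical and therefore has a proper subgraph needing $3$ colours). But the step you yourself flag as the main obstacle --- producing, for $\chi(G)\ge 4$, a vertex or edge whose deletion preserves the chromatic number --- cannot be carried out, because the lemma is false in that range. The property ``every proper subgraph $H$ has $\chi(H)<\chi(G)$'' is precisely $\chi$-criticality, and for every $k\ge 4$ there exist $k$-critical graphs other than $K_k$: the odd wheel $W_5=C_5+K_1$ is $4$-critical (delete a rim edge and the rim becomes a $2$-colourable path with the hub taking a third colour; delete a spoke at $v$ and $v$ may reuse the hub's colour), and the Gr\"{o}tzsch graph $\mu(C_5)$ is a triangle-free $4$-critical graph. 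Both are connected, non-complete and not odd cycles, yet every proper subgraph of either has chromatic number at most $3$. (The paper itself concedes this in Proposition \ref{Prop-2.2}(vi), where the odd wheel is stated to be its own unique chromatic core subgraph.) So no Kempe-chain or minimum-degree argument can close the gap.

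The underlying error --- shared by the paper's own one-line proof, which makes exactly the leap you were uneasy about --- is treating Brooks' theorem as if it characterised critical graphs. Brooks' theorem tells you when $\chi(G)=\Delta(G)+1$; it says nothing about whether some proper subgraph still requires $\chi(G)$ colours. Your two proposed repairs also do not hold up: two neighbours of $v$ sharing a colour yields no surplus colour class usable on $G-v$ (in a $4$-critical graph every vertex has degree at least $3$, and deleting any vertex drops $\chi$ regardless), and there is no general implication from ``$e$ joins two vertices in the same Kempe component'' to $\chi(G-e)=\chi(G)$. A correct statement would replace ``complete graph or odd cycle'' by ``$\chi$-critical graph'', but for $k\ge 4$ the $k$-critical graphs form an infinite, unclassified family, so the lemma cannot be salvaged in the form given.
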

\begin{proof}~
From Brook's Theorem it follows that $\chi(G) = \Delta(G)+1$ if and only if $G$ is either an odd cycle or a complete graph. It implies that for only odd cycles and complete graphs, all its proper subgraphs $H$ be such that $\chi(H) < \chi(G)$. All other graphs $G$ will have at least one proper subgraph $H$ with, $\chi(H) = \chi(G)$.
\end{proof}

Clearly, the subgraph $H$ that exists by Lemma \ref{Lem-2.1}, can also be the corresponding induced subgraph, $\langle V(H)\rangle$. This remark is important to link further results with the notion of a chromatic core subgraph.

\subsection{Chromatic core subgraphs of certain classical graphs}

The subsection begins with results for some classical graphs and certain trivial observations. Note that unless mentioned otherwise, only finite, undirected connected simple graphs will be considered.

\begin{proposition}\label{Prop-2.2}
\begin{enumerate}\itemsep0mm
\item[(i)]~ An acyclic graph $G$ has $P_2$ as a chromatic core subgraph.
\item[(ii)]~ An even cycle $C_n,\ n \geq 4$ has $P_2$ as a chromatic core subgraph.
\item[(iii)]~ An odd cycle $C_n,\ n \geq 3$ has $C_n$ as its unique chromatic core subgraph.
\item[(iv)]~ A complete graph $K_n,\ n \geq 1$ has $K_n$ as its unique chromatic core subgraph.
\item[(v)]~ An even wheel $W_n,\ n \geq 4$ has $C_3$ as a chromatic core subgraph.
\item[(vi)]~ An odd wheel $W_n,\ n \geq 3$ has $W_n$ as its unique chromatic core subgraph.
\item[(vii)]~ An even helm graph $H_n,\ n \geq 4$ has $C_3$ as a chromatic core subgraph.
\item[(viii)]~ An odd helm graph $H_n,\ n \geq 3$ has odd $W_n$ as its unique chromatic core subgraph.
\item[(ix)]~ If $diam(G) = 1$ then $G$ is its unique chromatic core subgraph.
\item[(x)]~ If $\chi(G) = \Delta(G) + 1$ then $G$ is its unique chromatic core subgraph.
\item[(xi)]~ If $H$ is a chromatic core subgraph of $G$ then the Mycielski graph $\mu(H)$ is a chromatic core subgraph of the Mycielski graph $\mu(G)$.
\end{enumerate}
\end{proposition}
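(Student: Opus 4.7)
The plan is to dispatch the eleven parts of Proposition~\ref{Prop-2.2} through a uniform strategy: in each case, exhibit an induced subgraph $H$ with $\chi(H)=\chi(G)$, and then argue that no induced subgraph with strictly smaller structor index attains that chromatic number. Lemma~\ref{Lem-2.1} together with Brook's Theorem will do most of the structural work, while tallying vertices and edges to compare structor indices is routine once the right candidates are identified.

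Parts (i), (ii), (v), (vii) I would settle by identifying what the smallest connected graph of a given chromatic number looks like. A tree or an even cycle has $\chi=2$, and the unique connected graph of chromatic number $2$ with minimum structor index is $P_{2}$, so any single edge serves as a chromatic core subgraph. An even wheel $W_{n}$ and an even helm $H_{n}$ each have chromatic number $3$ (the hub with two adjacent rim vertices forces a triangle, while a proper $2$-colouring is available outside this triangle). Since any graph of chromatic number $3$ must contain an odd cycle and the odd cycle of smallest structor index is $C_{3}$, the triangle formed by the hub and two consecutive rim vertices serves as a chromatic core subgraph.

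Parts (iii), (iv), (vi), (viii), (ix), (x) rely on uniqueness arguments rooted in Lemma~\ref{Lem-2.1} and Brook's Theorem. Parts (iii) and (iv) are immediate because every proper subgraph of an odd cycle or a complete graph has strictly smaller chromatic number. Part (ix) reduces to (iv) since $diam(G)=1$ forces $G$ to be complete, and part (x) reduces to (iii) and (iv) directly by Brook's Theorem. For the odd wheel in (vi) I would note that deleting the hub leaves an odd cycle with $\chi=3$, while deleting any rim vertex leaves a fan graph with $\chi=3$, so $W_{n}$ is the unique induced subgraph of chromatic number $4$. Part (viii) is the most delicate here: the induced odd wheel obtained by discarding all pendants realises $\chi=4$, and one must show that no smaller induced subgraph of $H_{n}$ attains $\chi=4$. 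Brook's Theorem reduces this to locating either a $K_{4}$ or a vertex of induced degree at least $4$; since $H_{n}$ contains no $K_{4}$ for $n\geq 5$ and the only vertices of degree exceeding $3$ are the hub and the rim vertices, a short case analysis forces the presence of the entire odd wheel inside any such induced subgraph.

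Part (xi) is where I expect the main obstacle. The easy direction is to check that when $H$ is induced in $G$, the vertex set $V(H)\cup V'(H)\cup\{w\}$ induces a copy of $\mu(H)$ inside $\mu(G)$, and then the identity $\chi(\mu(X))=\chi(X)+1$ gives $\chi(\mu(H))=\chi(H)+1=\chi(G)+1=\chi(\mu(G))$. The harder direction is minimality: one must argue that any induced subgraph $F$ of $\mu(G)$ with $\chi(F)=\chi(G)+1$ contains enough of the Mycielski apparatus (the apex, its attached twin vertices, and their originals) that the original layer of $F$ projects to an induced subgraph of $G$ of chromatic number $\chi(G)$; the minimality of $H$ in $G$ would then force $si(F)\geq si(\mu(H))$. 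This projection argument is the only real obstacle in the proposition; the remaining ten items follow directly from Lemma~\ref{Lem-2.1} and Brook's Theorem combined with the structural observations above.
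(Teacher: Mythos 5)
Your proposal is correct and follows essentially the same route as the paper, which simply declares parts (i)--(viii) ``straightforward'' and handles (ix)--(x) by the same reduction to Brook's Theorem and to parts (iii)--(iv) that you use; your write-up is in fact considerably more detailed, e.g.\ in the degree/criticality analysis for the odd helm in (viii). The one genuine obstacle you identify --- showing in part (xi) that no induced subgraph of $\mu(G)$ smaller than $\mu(H)$ attains $\chi(\mu(G))$ --- is precisely the step the paper itself does not prove either: its argument for (xi) merely asserts that $\mu(H)$ is the smallest such induced subgraph, so your outlined projection argument, if completed, would actually go beyond the published proof rather than fall short of it.
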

\begin{proof}~
The results from $(i)$ to $(viii)$ are straight forward and needs no explanation.
\begin{enumerate}\itemsep0mm  
\item[(ix)]~ If $diam(G) =1$, then $G$ is a complete graph and hence the result follows from Part (iv).
\item[(x)]~ By Brook's Theorem, $\chi(G) \leq \Delta(G)$ if and only if $G$ is neither a complete graph nor an odd cycle. Hence, if $\chi(G) = \Delta(G) + 1$, then $G$ is either complete or an odd cycle. So the result follows from Parts (iii) and (iv).
\item[(xi)]~ Since $\chi(\mu(G)) = \chi(G) + 1$, and a chromatic core subgraph $H$ of $G$ is an induced subgraph of $G$, it follows that $\mu(H)$ is the smallest induced subgraph of $\mu(G)$ such that $\chi(\mu(H)) = \chi(H) + 1 = \chi(G) +1 = \chi(\mu(G))$. Therefore, the result.
\end{enumerate}
\end{proof}

The next result is the major result within the context of chromatic core subgraphs.

\begin{theorem}\label{Thm-2.3}
For a given integer $m \in \mathbb{N}$, the smallest graph $G$ with $\chi(G=m$ is the complete graph $K_m$.
\end{theorem}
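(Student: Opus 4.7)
The plan is to establish the matching lower and upper bounds $si(G) \geq m + \binom{m}{2} = si(K_m)$ for every connected simple graph $G$ with $\chi(G) = m$. First I would record the easy vertex count: since a proper colouring partitions $V(G)$ into independent colour classes and exactly $m$ colours are required, at least $m$ of these classes are non-empty, so $\nu(G) \geq m$.

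The substantive step is the edge bound $\varepsilon(G) \geq \binom{m}{2}$. I would fix any minimum proper colouring $\varphi : V(G) \to \{c_1, c_2, \dots, c_m\}$ and argue by contradiction that for every pair $\{c_i, c_j\}$ of distinct colours there exists at least one edge of $G$ with one endpoint coloured $c_i$ and the other coloured $c_j$. Indeed, if no such edge existed, one could recolour every vertex in the colour class of $c_j$ with $c_i$ while preserving properness, producing a proper colouring of $G$ that uses only $m-1$ colours and contradicting $\chi(G) = m$. Because each edge has only two endpoints, it is associated with a unique unordered pair of colour classes, so the edge sets corresponding to different pairs are pairwise disjoint. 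Summing the contribution of one edge per pair therefore gives $\varepsilon(G) \geq \binom{m}{2}$.

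Adding the two bounds yields $si(G) = \nu(G) + \varepsilon(G) \geq m + \binom{m}{2} = si(K_m)$. Since $K_m$ achieves this value and clearly has $\chi(K_m) = m$, it is a smallest graph with chromatic number $m$. For uniqueness up to isomorphism (which complements the proposition's earlier claim that $K_n$ is the unique chromatic core subgraph of itself), observe that equality in the bound forces $\nu(G) = m$ and $\varepsilon(G) = \binom{m}{2}$, so $G$ is an $m$-vertex graph with the maximum possible number of edges, i.e., $G \cong K_m$.

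I do not expect a genuine obstacle; the only delicate point is making the colour-class merging argument explicit, since one must stress that the edge populations assigned to distinct pairs of colour classes are disjoint in order to conclude $\varepsilon(G) \geq \binom{m}{2}$ by simple addition rather than by an inclusion-exclusion type count.
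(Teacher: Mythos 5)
Your proof is correct, and it takes a genuinely different route from the paper. The paper argues by induction on $m$: starting from $K_\ell$ it adds a new vertex and then claims that edges must be added iteratively until the new vertex is joined to all of $K_\ell$, concluding that $K_{\ell+1}$ is optimal. That argument tacitly assumes the extremal graph for chromatic number $\ell+1$ arises by augmenting the extremal graph for $\ell$, which is not justified a priori. Your approach instead proves the two lower bounds $\nu(G)\geq m$ (every colour class of a minimum proper colouring is non-empty) and $\varepsilon(G)\geq \binom{m}{2}$ (between any two colour classes some edge must exist, else merging the classes would give an $(m-1)$-colouring; and each edge is charged to exactly one unordered pair of classes, so the counts add). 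Summing gives $si(G)\geq m+\binom{m}{2}=si(K_m)$ directly, with no induction and no assumption about how extremal graphs are built. As a bonus, your equality analysis ($\nu(G)=m$ and $\varepsilon(G)=\binom{m}{2}$ forces a complete graph on $m$ vertices) delivers uniqueness up to isomorphism, which the paper's argument does not cleanly establish. The one point worth stating explicitly, as you note, is the disjointness of the edge sets assigned to distinct colour pairs; once that is said, the proof is complete and is strictly more rigorous than the published one.
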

\begin{proof}~
For $m = 1$, the graph $G = K_1$ is clearly the smallest graph with $\chi(G)=1$ and $si(G)=1$. For $m=2$, the graph $G=K_2=P_2$ is clearly the smallest graph with $\chi(G)=2$ and $si(G)=3$. Also, by the same reasoning, $G=K_3$ is the smallest graph for which $\chi(G)=3$ and $si(G)=6$. Assume that for $4 \leq m \leq \ell$, the smallest graph $G$ for which $\chi(G)=m$ is $G=K_m$ with $si(G)=m+\frac{1}{2}m(m-1)$. Consider $m=\ell$ and let $V(K_\ell)=\{v_1,v_2,v_3,\ldots,v_\ell\}$.
	
Consider $m=\ell+1$. Begin with $K_\ell$ and add an isolated vertex $u$. Clearly, $\chi(K_\ell \cup \{u\})=\ell$ with $si(K_\ell\cup \{u\})=si(K_\ell)+1$. Clearly, adding another isolated vertex or more isolated vertices will not assist to increase the chromatic number. Adding a pendant vertex, adds a count of $2$ elements and hence that is also not optimal. Hence, without loss of generality, add the edge $uv_1$ to obtain $K'_\ell$. Clearly, edges must be added iteratively to obtain $K^{'''\ldots '~(\ell ~times)}_\ell$ to have $\chi(K^{'''\dots '~(\ell ~times)}_\ell) = \ell+1$. Clearly, $1 + \ell < t_{\geq 2} + \ell$ and $K^{'''\dots '~(\ell ~times)}_\ell = K_{\ell + 1}$ with $si(K_{\ell + 1})=(\ell + 1) + \frac{1}{2}(\ell + 1)\ell$. Therefore, the result holds for $m = \ell + 1$ and hence  the results holds for all $m \in \mathbb{N}$, by mathematical induction.
\end{proof}

Recall that the clique number $\omega(G)$ is the order of the largest clique in $G$. Also recall that a perfect graph $G$ is a graph for which any induced subgraph $H$ has, $\omega(H) = \chi(H)$. For a weakly perfect graph, it is only known that $\omega(G) = \chi(G)$, with or without the condition being valid for all the induced proper subgraphs. Therefore, all perfect graphs are weakly perfect as well (see \cite{1}). 

\noi The next corollary is a direct consequence of the aforesaid and of Theorem \ref{Thm-2.3}.

\begin{corollary}
\begin{enumerate}\itemsep0mm
\item[(i)]~ For a weakly perfect graph $G$ its largest clique $H$ is a chromatic core subgraph.
\item[(ii)]~ For a non-weakly perfect graph $G$ the only unique chromatic core subgraph is $G$ itself.
\end{enumerate}
\end{corollary}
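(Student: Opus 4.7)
The plan is to read off both parts directly from Theorem~\ref{Thm-2.3}, dividing cases by whether $G$ satisfies $\omega(G) = \chi(G)$. I would begin by writing $m := \chi(G)$ and recording the uniform observation that, by Theorem~\ref{Thm-2.3}, every induced subgraph $H'$ of $G$ with $\chi(H') = m$ satisfies $si(H') \geq si(K_m) = m + \tfrac{1}{2}m(m-1)$, with equality attainable only when $H' \cong K_m$. This bound is the sole analytic ingredient needed; everything else is a matter of whether a copy of $K_m$ is actually available inside $G$.

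For part (i), weak perfection gives $\omega(G) = m$, so $G$ contains an induced copy of $K_m$; call this induced clique $H$. Then $\chi(H) = m = \chi(G)$ and $si(H) = si(K_m)$ meets the lower bound above, so no induced subgraph of $G$ with chromatic number $m$ can be strictly smaller in structor index. Hence $H$ is a chromatic core subgraph of $G$. This step is purely routine once Theorem~\ref{Thm-2.3} is in hand, and I would write it as two sentences: produce the induced clique, then invoke extremality.

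For part (ii), non-weak perfection gives $\omega(G) < m$, so $G$ contains no induced $K_m$, and any candidate chromatic core subgraph must be a non-complete induced subgraph whose structor index strictly exceeds $si(K_m)$. The plan would then be to argue that within $G$ the only such witness is $G$ itself. The main obstacle lies precisely here: this uniqueness claim is \emph{not} a formal consequence of $\omega(G) < \chi(G)$, and Lemma~\ref{Lem-2.1} in fact produces a proper induced subgraph $H \subsetneq G$ with $\chi(H) = \chi(G)$ whenever $G$ is neither an odd cycle nor complete, which rules out the stated conclusion in full generality.

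To recover a correct statement I would either (a) strengthen the hypothesis by assuming $G$ is vertex-critical, so that every proper induced subgraph of $G$ has chromatic number strictly less than $m$ and hence $G$ is automatically its own unique chromatic core subgraph, or (b) weaken the conclusion to the assertion that \emph{no induced clique of $G$ can serve as a chromatic core subgraph}, which does follow at once from $\omega(G) < m$ together with the structor-index bound. Option (a) is the cleanest route and is what I would adopt to close the proof.
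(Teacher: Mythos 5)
Your treatment of part (i) is exactly the paper's intended argument: the paper gives no written proof, stating only that the corollary is ``a direct consequence'' of Theorem~\ref{Thm-2.3}, and the two-sentence argument you outline (weak perfection supplies an induced $K_{\chi(G)}$, and Theorem~\ref{Thm-2.3} shows no graph of that chromatic number can have smaller structor index) is the right way to make that precise. One small caution: you should justify the claim that equality $si(H')=si(K_m)$ forces $H'\cong K_m$, or simply drop it, since part (i) only needs the lower bound itself and does not need uniqueness of the extremal graph.

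For part (ii) your diagnosis is correct and is the substantive content of your review: the statement is false as written, and no proof exists to compare against. Non-weak perfection, $\omega(G)<\chi(G)$, only rules out a clique as a chromatic core subgraph; it says nothing about whether some other proper induced subgraph attains $\chi(G)$. A concrete counterexample is the graph $G$ obtained from $C_5$ by attaching a pendant vertex: $\omega(G)=2<3=\chi(G)$, so $G$ is non-weakly perfect, yet the induced $C_5$ is a proper induced subgraph with $\chi=3$ and strictly smaller structor index, so $G$ is not its own chromatic core subgraph. Your two proposed repairs are both sound; option (a) (assume $G$ is vertex-critical, so every proper induced subgraph has smaller chromatic number) gives the cleanest true statement, while option (b) (no induced clique of $G$ is a chromatic core subgraph, and indeed every chromatic core subgraph of $G$ is itself non-weakly perfect, since $\omega(H)\leq\omega(G)<\chi(G)=\chi(H)$) is probably closest to what the authors could legitimately have concluded from Theorem~\ref{Thm-2.3}.
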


Many graph classes are indeed perfect graphs such as: complete graphs, even cycles, paths, bipartite graphs, interval graphs, permutation graphs, split graphs, threshold graphs and others (refer to \cite{1}). A recently introduced class of graph called the linear Jaco graphs (see \cite{7}), will be proven to be perfect as well. 

\begin{definition}\label{Defn-2.2}{\rm 
Let $f(x) = mx + c; x \in \mathbb{N},$ $m,c\in \mathbb{N}_0$. The family of \textit{infinite linear Jaco graphs} denoted by $\{J_\infty(f(x)):f(x) = mx + c; x,m \in \mathbb{N}$ and $c \in \mathbb{N}_0\}$ is defined by $V(J_\infty(f(x))) = \{v_i: i \in \mathbb{N}\},\ A(J_\infty(f(x))) \subseteq \{(v_i, v_j): i, j \in \mathbb{N}, i< j\}$ and $(v_i,v_ j) \in A(J_\infty(f(x)))$ if and only if $(f(i) + i) - d^-(v_i) \geq j$.
}\end{definition}

\noi Figure \ref{fig:FIG2} depicts the Jaco graph $J_{10}(x)$.

\begin{figure}[h]
\centering
\includegraphics[width=0.4\linewidth]{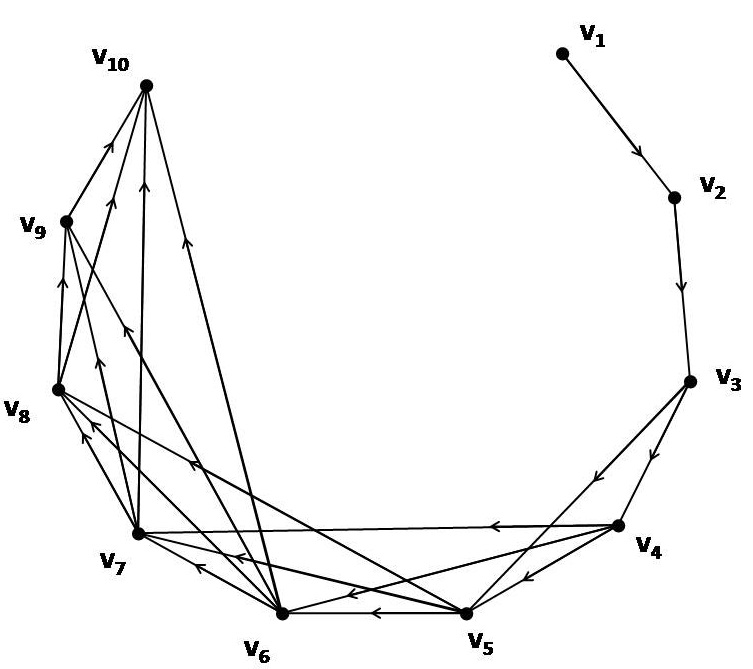}
\caption{Jaco graph $J_{10}(x)$.}
\label{fig:FIG2}
\end{figure}

For this application we consider the underlying finite linear Jaco graph denoted by, $J_n^*(f(x))$. This finite graph is obtained by lobbing off all vertices $v_i,\ i \geq n+1$ together with the corresponding edges.
\begin{theorem}
The family of finite, undirected linear Jaco graphs are perfect graphs.
\end{theorem}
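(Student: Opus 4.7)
The plan is to realise each finite linear Jaco graph $J_n^*(f(x))$ as an interval graph, and then invoke the classical theorem that every interval graph is perfect (interval graphs are chordal, and chordal graphs are perfect).

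The key observation is that the defining adjacency condition in Definition \ref{Defn-2.2}, namely $(v_i,v_j)\in A(J_\infty(f(x)))$ iff $i<j$ and $j\leq f(i)+i-d^-(v_i)$, already has the shape of an interval-overlap condition on the real line. Concretely, I would assign to each vertex $v_i$ the closed interval
\[
I_i \;=\; \bigl[\,i,\ \max\{i,\ f(i)+i-d^-(v_i)\}\,\bigr].
\]
Because every arc in $J_\infty(f(x))$ goes from a lower-indexed vertex to a higher-indexed one, the in-degree $d^-(v_i)$ depends only on the arcs incident to $v_i$ from vertices $v_k$ with $k<i$. Hence $d^-(v_i)$, and therefore $I_i$, can be defined unambiguously by induction on $i$.

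Next, I would verify the adjacency equivalence. For $i<j$, the interval $I_j$ starts at $j>i$, so $I_i\cap I_j\neq\emptyset$ iff the right endpoint of $I_i$ is at least $j$, iff $j\leq f(i)+i-d^-(v_i)$, which is precisely the condition for $v_iv_j$ to be an edge of the underlying undirected graph of $J_n^*(f(x))$. Thus $\{I_i:1\leq i\leq n\}$ is an interval representation of $J_n^*(f(x))$, so $J_n^*(f(x))$ is an interval graph, and therefore perfect. This would establish the theorem for every $n\in\mathbb{N}$ and every admissible $f(x)=mx+c$.

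The only subtle point, rather than a genuine obstacle, is the degenerate case $d^-(v_i)>f(i)$, in which the natural right endpoint $f(i)+i-d^-(v_i)$ would fall below $i$. In that case $v_i$ has no out-arcs, and taking $I_i=\{i\}$ via the $\max$ in the definition above correctly encodes this while still allowing $I_i$ to be met by the interval of any earlier $v_k$ whose right endpoint reaches $i$, so the adjacency equivalence continues to hold. With that bookkeeping in place, the interval representation is straightforward and the perfectness of $J_n^*(f(x))$ follows at once from the well-known chordality of interval graphs.
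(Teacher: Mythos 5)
Your proof is correct, but it takes a genuinely different route from the paper's. The paper argues by induction on the order $n$: it checks $J^*_4(x)=P_4$ and $J^*_5(x)$ by listing induced subgraphs, then asserts that the $2^{k+1}-2^{k}$ new induced subgraphs arising at step $k+1$ are perfect ``by induction,'' and finally disposes of the case $m=0$ by observing that those Jaco graphs are disjoint unions of cliques. You instead exhibit an explicit interval representation $I_i=\bigl[\,i,\ \max\{i,\ f(i)+i-d^-(v_i)\}\,\bigr]$, exploiting the fact that every out-neighbourhood in a linear Jaco graph is a contiguous block of indices $\{i+1,\dots,f(i)+i-d^-(v_i)\}$; the equivalence ``$I_i\cap I_j\neq\emptyset$ iff $v_iv_j\in E$'' is exactly the defining inequality, and perfection then follows from the standard chain interval $\Rightarrow$ chordal $\Rightarrow$ perfect. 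Your argument buys several things: it is uniform in $m$ and $c$ (no case split between $m\geq 1$ and $m=0$, since unions of cliques are trivially interval graphs), it proves the strictly stronger structural statement that linear Jaco graphs are interval graphs, and it immediately yields $\omega=\chi$ on every induced subgraph, which directly supports the paper's subsequent corollary that a maximum clique (the Hope subgraph) is the chromatic core subgraph. It also repairs a soft spot in the paper's induction: the new induced subgraphs at step $k+1$ all contain $v_{k+1}$ and so are not covered by the induction hypothesis as stated, whereas hereditary perfection is automatic once the interval representation is in hand. The one point worth making explicit in a final write-up is that passing from $J_\infty(f(x))$ to $J^*_n(f(x))$ does not alter any $d^-(v_i)$ for $i\leq n$ (all arcs run from lower to higher index), so the truncated family $\{I_1,\dots,I_n\}$ really does represent the finite graph; you implicitly use this and it is true, but it deserves a sentence.
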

\begin{proof}~
\textit{Case-1:} Let $m \geq 1)$. A graph of order $n \geq 1$ has $2^n-1$ induced subgraphs. Consider the case $f(x)=x$. Note that $J^*_4(x)=P_4$ and hence it and all its induced subgraphs are perfect graphs. Consider $J^*_5(x)$. Here, we only need to verify pefectness of the induced subgraphs $\langle v_1,v_5\rangle$, $\langle v_2,v_5\rangle$, $\langle v_3,v_5\rangle, \langle v_4,v_5\rangle, \langle v_1,v_2,v_5\rangle,\langle v_1,v_3,v_5\rangle$, $\langle v_1,v_4,v_5\rangle$, $\langle v_2,v_3,v_5\rangle$, $\langle v_2,v_4,v_5\rangle$, $\langle v_3,v_4,v_5\rangle$, $\langle v_1, v_2, v_3, v_5\rangle$, $\langle v_1, v_2,v_4,v_5\rangle$, $\langle v_1,v_3,v_4,v_5\rangle$, $\langle v_2,v_3,v_4,v_5\rangle$, $\langle v_1,v_2,v_3,v_4,v_5\rangle$ and $\langle v_5\rangle$.
	
Knowing that the disjoint union of two perfect graphs remains perfect and using Definition \ref{Defn-2.2} makes it easy to verify that each of the induced subgraphs is indeed perfect. Since the result holds for $n = 1,2,3,4,5$, assume that it holds for $6\leq n\leq k$.
	
Consider the case $n=k+1$. It follows, by induction, that the $2^{k+1}-2^k$ additional induced subgraphs are perfect. Hence, the result holds forall $n \in \mathbb{N}$.
	
Finally, since $f(x)=mx+c; x,m \in \mathbb{N},\ c\in \mathbb{N}_0$ are satisfied by only the set of positive integer ordered pairs, (that is, $\{(x,mx + c):m,x \in \mathbb{N}\ \mbox{and}\ c \in \mathbb{N}_0\}$), by similar reasoning to that in the case of $f(x)=x$, it follows by induction that the result holds for all finite, undirected linear Jaco graphs in this subfamily.
	
\textit{Case-2:} Let $m=0$. It follows from Definition \ref{Defn-2.2} that these linear Jaco graphs are all the disjoint union of complete graphs, $K_t,\ t\leq c+1$. Hence, the result holds for this subfamily as well.

Therefore, the result holds for all finite, undirected linear Jaco graphs.
\end{proof}

Figure \ref{fig:FIG4} depicts $J_{15}(3)$ and hence $c=3$ to clarify Case-2 of the above result.

\begin{figure}[h]
\centering
\includegraphics[width=0.4\linewidth]{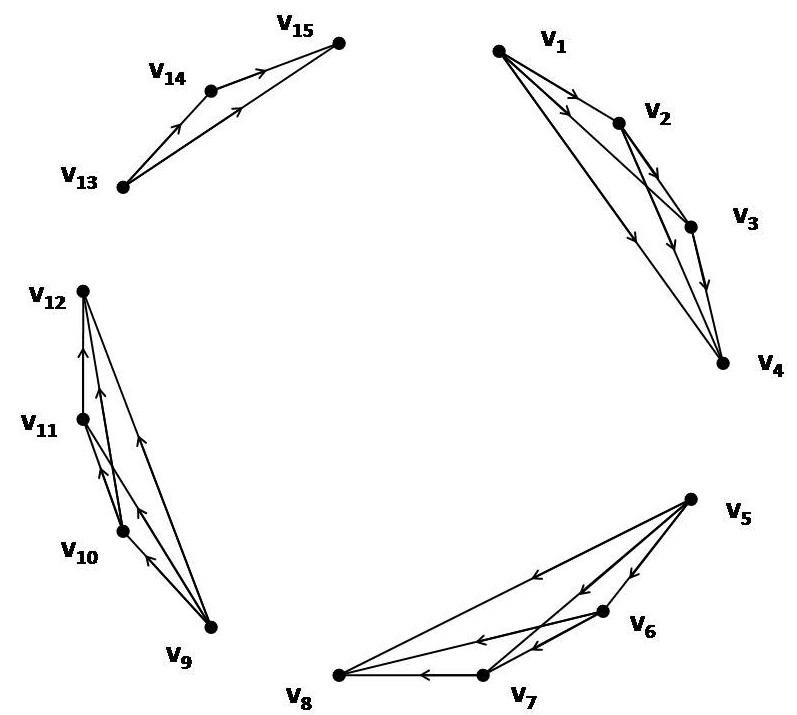}
\caption{Jaco graph $J_{15}(3)$.}\label{fig:FIG4}
\end{figure}

\begin{corollary}
For finite, undirected linear Jaco graphs, the unique chromatic core subgraphs is the respective Hope subgraphs (refer to \cite{7} for the definition of Hope subgraphs Jaco graphs).
\end{corollary}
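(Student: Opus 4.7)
The plan is to chain the theorem just proved with the preceding corollary on weakly perfect graphs. Since every finite, undirected linear Jaco graph $J^*_n(f(x))$ has just been shown to be perfect, it is in particular weakly perfect, so Part (i) of that corollary tells us that some largest clique of $J^*_n(f(x))$ is already a chromatic core subgraph. It therefore suffices to (a) identify this largest clique with the Hope subgraph $\mathcal{H}$ of \cite{7}, and then (b) promote existence to uniqueness.

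For (a), I would use the adjacency rule $(v_i,v_j)\in A(J_\infty(f(x))) \iff (f(i)+i)-d^-(v_i) \geq j$ from Definition \ref{Defn-2.2} to verify directly that the vertices constituting $\mathcal{H}$ (the terminal run of vertices $v_i,\ldots,v_n$ on which the in-degree recurrence stabilises, as per \cite{7}) are pairwise adjacent and hence induce a complete subgraph, and that no longer clique exists elsewhere in $J^*_n(f(x))$. By perfectness this forces $|\mathcal{H}| = \omega(J^*_n(f(x))) = \chi(J^*_n(f(x)))$, so $\mathcal{H}\cong K_{\chi(G)}$ is a chromatic core subgraph in the sense of Definition \ref{Defn-2.1}.

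For (b), suppose $H'$ is any other chromatic core subgraph. Theorem \ref{Thm-2.3} states that the smallest graph with $\chi = m$ is $K_m$, and perfectness of $J^*_n(f(x))$ yields $\omega(H') = \chi(H') = \chi(G)$; so $H'$ contains $K_{\chi(G)}$ as an induced subgraph, and by the structor-index minimality required by Definition \ref{Defn-2.1}, $H'$ itself must be (isomorphic to) $K_{\chi(G)}$, realised by a maximum clique of the ambient Jaco graph. The crux of the argument is then the uniqueness of this maximum clique \emph{as a vertex subset}. I would establish it by exploiting the strict monotonicity inherent in the defining rule of Definition \ref{Defn-2.2}: any maximum clique must sit on a consecutive block of indices, and the starting index of such a block is pinned down by the first $i$ for which $(f(i)+i)-d^-(v_i)$ attains its eventual stable maximum --- which is precisely the first vertex of the Hope subgraph. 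This final step, namely ruling out a second, distinct consecutive index block that also realises $\omega$, is the main obstacle; it is where the proof must lean most heavily on the structural facts about Hope subgraphs recorded in \cite{7}, likely by an induction on $n$ driven by the in-degree recursion.
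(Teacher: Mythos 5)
Your route is exactly the paper's intended one: the paper offers no explicit proof of this corollary, presenting it as an immediate consequence of the perfectness theorem via the preceding corollary (weakly perfect $\Rightarrow$ a largest clique is a chromatic core subgraph), with the identification of that clique as the Hope subgraph delegated to \cite{7}. The uniqueness step you flag as the main obstacle is legitimate to worry about, but it reduces cleanly: by Theorem \ref{Thm-2.3} any chromatic core subgraph of a perfect graph must be a $K_{\chi(G)}$, i.e.\ a maximum clique, so uniqueness of the chromatic core subgraph is exactly uniqueness of the maximum clique, which is the structural property of the Hope subgraph recorded in \cite{7}; citing it there is what the paper itself does implicitly, so your proposal is complete to the same standard as the paper's.
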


\section{Chromatic Core Subgraphs of Certain Graph Operations}

In this section, the join, the corona, the Cartesian product, the tensor product, strong product and the lexicographical product (refer to \cite{5} for the definitions of graph products) of graphs $G$ and $H$ are considered.

\subsection{Chromatic Core Subgraphs of Join of Graphs}

Let $G_1(V_1,E_1)$ and $G_2(V_2,E_2)$ be two graphs. Then, their {\em join}, denoted by $G_1+G_2$, is the graph whose vertex set is $V_1\cup V_2$ and edge set is $E_1\cup E_2\cup E_{ij}$, where $E_{ij}=\{u_iv_j:u_i\in G_1,v_j\in G_2\}$ (see \cite{6}).

\begin{theorem}\label{Thm-3.1}
If $G'$ and $H'$ are chromatic core subgraphs of the graphs $G$ and $H$ respectively, then $G'+H'$ is a chromatic core subgraph of $G+H$.
\end{theorem}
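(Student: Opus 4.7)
The plan is to analyse induced subgraphs of $G+H$ by decomposing them as joins of induced subgraphs of $G$ and $H$, then invoke the minimality of $G'$ and $H'$ separately on each side and combine via the structor-index formula for a join.

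Three background facts underlie the argument. First, $\chi(G+H)=\chi(G)+\chi(H)$, since every vertex of $V(G)$ is adjacent to every vertex of $V(H)$ and the two palettes used on the two sides must therefore be disjoint. Second, every induced subgraph $F$ of $G+H$ decomposes as $F=G[A]+H[B]$ where $A=V(F)\cap V(G)$ and $B=V(F)\cap V(H)$, because the join supplies every edge of $A\times B$. Third, a direct count of vertices and edges yields the identity $si(G_1+G_2)=si(G_1)+si(G_2)+\nu(G_1)\nu(G_2)$.

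From these I would first verify that $G'+H'$ is a legitimate candidate: the second fact applied to $V(G')\cup V(H')$ shows that $G'+H'$ is induced in $G+H$, and $\chi(G'+H')=\chi(G')+\chi(H')=\chi(G)+\chi(H)=\chi(G+H)$. For minimality, take an arbitrary induced subgraph $F$ of $G+H$ with $\chi(F)=\chi(G+H)$ and write $F=F_G+F_H$ via the second fact. Since $\chi(F_G)\leq\chi(G)$, $\chi(F_H)\leq\chi(H)$ and $\chi(F_G)+\chi(F_H)=\chi(G)+\chi(H)$, equality must hold in both, so $F_G$ and $F_H$ are chromatic core candidates for $G$ and $H$ and Definition~\ref{Defn-2.1} gives $si(F_G)\geq si(G')$ and $si(F_H)\geq si(H')$. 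Substituting these into the third fact yields
\[
si(F)-si(G'+H') \;=\; \bigl(si(F_G)-si(G')\bigr) + \bigl(si(F_H)-si(H')\bigr) + \bigl(\nu(F_G)\nu(F_H)-\nu(G')\nu(H')\bigr).
\]

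The main obstacle is the sign of the cross-term $\nu(F_G)\nu(F_H)-\nu(G')\nu(H')$: the first two summands are non-negative by minimality of the chromatic cores, but the third is \emph{a priori} indefinite because Definition~\ref{Defn-2.1} minimises $si$ rather than $\nu$. I would handle this by proving the auxiliary assertion that $\nu(G')\leq\nu(F_G)$ for every induced subgraph $F_G\subseteq G$ with $\chi(F_G)=\chi(G)$, and likewise for $H$. The key observations for this auxiliary claim are that (a) any chromatic core subgraph is necessarily $\chi(G)$-critical, for otherwise a single vertex deletion would produce an induced subgraph with strictly smaller $si$ and the same chromatic number; and (b) an arbitrary candidate $F_G$ can be pruned to a $\chi(G)$-critical induced subgraph on at most $\nu(F_G)$ vertices by iteratively deleting vertices that do not drop $\chi$. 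Comparing the two critical subgraphs, while breaking any residual $si$-ties in the choice of $G'$ by minimising $\nu$, forces $\nu(G')\leq\nu(F_G)$. Granting this, all three summands above are non-negative, hence $si(F)\geq si(G'+H')$ and $G'+H'$ is indeed a chromatic core subgraph of $G+H$.
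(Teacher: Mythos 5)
Your setup is sound and in fact more careful than the paper's own proof: the decomposition of an arbitrary induced subgraph $F$ of $G+H$ as $F_G+F_H$, the forced equalities $\chi(F_G)=\chi(G)$ and $\chi(F_H)=\chi(H)$, and the identity $si(F)=si(F_G)+si(F_H)+\nu(F_G)\nu(F_H)$ are all correct, and you rightly isolate the cross-term $\nu(F_G)\nu(F_H)-\nu(G')\nu(H')$ as the crux (the paper's argument only tests competitors of the form $G''+H'$, $G'+H''$, $G''+H''$ with ``smaller'' factors and never confronts this term at all). The gap is your auxiliary claim that $\nu(G')\leq\nu(F_G)$ for every induced $F_G\subseteq G$ with $\chi(F_G)=\chi(G)$. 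Your observations (a) and (b) are fine --- every chromatic core subgraph is vertex-critical, and every candidate can be pruned to a vertex-critical one --- but ``comparing the two critical subgraphs'' proves nothing: Definition~\ref{Defn-2.1} minimises $\nu+\varepsilon$, not $\nu$, and a $\chi(G)$-critical induced subgraph can have strictly fewer vertices yet strictly more edges than the $si$-minimiser, in which case no tie-breaking rule rescues the inequality. Concretely, for $k\geq 4$ there exist $k$-vertex-critical graphs $A$ and $B$ with $\nu(A)<\nu(B)$ but $\varepsilon(A)-\varepsilon(B)>\nu(B)-\nu(A)$, hence $si(A)>si(B)$ (compare dense $k$-critical graphs, which can have quadratically many edges, with the sparse extremal ones whose average degree is close to $k-1$); a graph $G$ containing both as induced subgraphs and no smaller $k$-chromatic induced subgraph has $G'=B$, while $F_G=A$ violates your claim.

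This is not a repairable technicality within your framework. With such a $G$ and with $H=H'=K_N$, your own identity gives $si(A+K_N)-si(G'+K_N)=\bigl(si(A)-si(G')\bigr)-N\bigl(\nu(G')-\nu(A)\bigr)$, which is negative for $N$ large, so $A+K_N$ is a strictly smaller induced subgraph of $G+H$ with the full chromatic number; the conclusion of the theorem itself then fails without further hypotheses. So what is missing is not a cleverer proof of the auxiliary claim but an additional assumption (for instance, that chromatic core subgraphs of the factors also minimise order among all $\chi$-preserving induced subgraphs), under which your computation goes through verbatim. As written, the step ``forces $\nu(G')\leq\nu(F_G)$'' is asserted rather than proved, and it is exactly where the argument breaks.
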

\begin{proof}~
Note that for the join $G+H$, we have $\chi(G+H)=\chi(G)+\chi(H)$. If an induced subgraph $G''$ which is smaller than $G'$ exists such that $\chi(G''+ H') = \chi(G + H)$, then $\chi(G'')=\chi(G)$ which is a contradiction. Similar reasoning applies to the cases $G'+H''$ and $G''+H''$.
	
Finally, let $E^*(G+H)$ be the additional edges resulting from the $+operation$ hence, $E^*(G+H) = E(G+H) - (E(G) \cup E(H))$. Since any edge induced subgraph resulting from an edge set $S\subseteq E^*(G+H)$ is a bipartite graph, such subgraph will only suffice if both, $\chi(G) = \chi(H) = 1$. The aforesaid implies that $G = H = K_1$. Hence the result.
\end{proof}

\subsection{Chromatic Core Subgraphs of Corona of Graphs}

The \textit{corona} of two graphs $G_1$ and $G_2$ (see \cite{6}), denoted by $G_1\odot G_2$, is the graph obtained by taking $|V(G_1)|$ copies of the graph $G_2$ and adding edges between each vertex of $G_1$ to every vertex of one (corresponding) copy of $G_2$. The following theorem establishes a range for the curling number of the corona of two graphs. The following theorem discusses the chromatic core subgraphs of corona of graphs.

\begin{theorem}\label{Thm-3.2}
If $G'$ and $H'$ are chromatic core subgraphs of the graphs $G$ and $H$ respectively, then $K_1\circ H'$ is a chromatic core subgraph of $G\circ H$.
\end{theorem}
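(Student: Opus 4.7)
The plan is to show first that $K_1\circ H'=K_1+H'$ sits inside $G\circ H$ as an induced subgraph with chromatic number equal to $\chi(G\circ H)$, and then to establish minimality in the structor index among all induced subgraphs of the same chromatic number. For the first task I would begin by recording the identity
\[
\chi(G\circ H)=\max\bigl\{\chi(G),\ \chi(H)+1\bigr\},
\]
whose lower bound is forced by the induced copy of $K_1+H$ arising from any apex vertex $v\in V(G)$ together with its attached copy $H_v$, and whose upper bound follows by colouring $G$ with $\chi(G)$ colours and then colouring each $H_v$ with $\chi(H)$ colours chosen to avoid the colour of $v$. The statement implicitly operates in the regime $\chi(H)+1\geq\chi(G)$, in which $\chi(G\circ H)=\chi(H)+1$. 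Applying Theorem \ref{Thm-3.1} with $K_1$ in the role of the first graph (or by a one-line direct computation), $\chi(K_1+H')=1+\chi(H')=\chi(G\circ H)$, and $K_1+H'$ appears as the induced subgraph on $\{v\}\cup V(H')$ for any fixed $v\in V(G)$, with $V(H')$ viewed inside $H_v$.

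For minimality, let $S$ be any induced subgraph of $G\circ H$ with $\chi(S)=\chi(H)+1$, and partition $V(S)$ as $A\cup\bigcup_{v} B_v$, where $A=V(S)\cap V(G)$ and $B_v=V(S)\cap V(H_v)$. Because vertices lying in distinct copies $H_u,H_v$ are non-adjacent in $G\circ H$, a proper colouring of $S$ can be assembled cone-by-cone, giving the identity
\[
\chi(S)=\max\Bigl\{\chi(\langle A\rangle),\ \max_{v\in A}\bigl(1+\chi(\langle B_v\rangle)\bigr),\ \max_{v\notin A}\chi(\langle B_v\rangle)\Bigr\}.
\]
Since every $\chi(\langle B_v\rangle)\leq\chi(H)$ and, by the operating assumption, $\chi(\langle A\rangle)\leq\chi(G)\leq\chi(H)+1$, the only way the maximum can reach $\chi(H)+1$ is through a cone term, so there exists $v\in A$ with $\chi(\langle B_v\rangle)=\chi(H)$. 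In particular, $A\neq\emptyset$.

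By the minimality defining a chromatic core subgraph of $H$, $si(\langle B_v\rangle)\geq si(H')$. The induced cone on $\{v\}\cup B_v$ has structor index $1+\nu(B_v)+si(\langle B_v\rangle)$, to be compared against $si(K_1+H')=1+\nu(H')+si(H')$. The main obstacle is that the chromatic-core condition controls $\nu+\varepsilon$ but weights vertices only once, whereas the cone weights vertices twice; closing the gap reduces to showing $\nu(B_v)\geq\nu(H')$. I would resolve this by invoking the vertex-criticality of any chromatic core subgraph (removing any vertex strictly decreases $si$ while leaving an induced subgraph, so it must also drop the chromatic number, else it would contradict the minimality of $H'$) together with Theorem \ref{Thm-2.3}: in the clique case $H'\cong K_{\chi(H)}$ the inequality $\nu(B_v)\geq\chi(H)=\nu(H')$ is immediate from $\chi(\langle B_v\rangle)=\chi(H)$, while in the non-clique case an edge-count comparison using $\delta(H')\geq\chi(H)-1$ together with $si(\langle B_v\rangle)\geq si(H')$ forces $\nu(B_v)\geq\nu(H')$. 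Any further vertices or edges of $S$ outside the chosen cone only enlarge $si(S)$, so $si(S)\geq si(K_1+H')$, completing minimality.
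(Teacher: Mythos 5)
Your cone-by-cone decomposition of an arbitrary induced subgraph $S$ of $G\circ H$ (into $A=V(S)\cap V(G)$ and the sets $B_v$) and the resulting formula for $\chi(S)$ are correct, and considerably more careful than the paper's own proof, which merely asserts $\chi(G\circ H)=\chi(H)+1$ and appeals to ``similar reasoning'' as in Theorem \ref{Thm-3.1}. However, two of your steps do not close. First, the regime you isolate, $\chi(G)\leq\chi(H)+1$, is not enough for your claim that the maximum can only be attained through a cone term: in the boundary case $\chi(G)=\chi(H)+1$ the part $\langle A\rangle$ alone can realise $\chi(S)=\chi(H)+1$ with every $B_v$ empty, and there the theorem itself fails. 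Take $G=K_4$ and $H=C_5$, so $H'=C_5$: then $\chi(G\circ H)=4$, the four apex vertices induce a $K_4$ with $si=10$, while $K_1\circ H'=K_1+C_5$ has $si=6+10=16$, so $K_1\circ H'$ is not a chromatic core subgraph of $G\circ H$. Your argument (and the statement) is only safe under the stronger hypothesis $\chi(G)\leq\chi(H)$, in which case $\chi(\langle A\rangle)\leq\chi(H)<\chi(H)+1$ and a cone term is indeed forced.

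Second, the step you yourself identify as the main obstacle, namely $\nu(B_v)\geq\nu(H')$, is not actually proved. The clique case is fine, but in the non-clique case the proposed ``edge-count comparison using $\delta(H')\geq\chi(H)-1$ together with $si(\langle B_v\rangle)\geq si(H')$'' is not an argument: writing $\nu(B_v)=\nu(H')-d$ and $si(\langle B_v\rangle)=si(H')+e$, the cone comparison you need reduces exactly to $e\geq d$, and neither the minimum-degree bound nor the $si$-minimality of $H'$ excludes $d\geq 1$ with $e=0$ (two chromatic core subgraphs of $H$ of equal structor index but different order). The underlying difficulty is structural: coning counts each vertex of the base twice (once as a vertex, once as a spoke), so the quantity minimised inside the cone is $2\nu+\varepsilon$ rather than $si=\nu+\varepsilon$, and an $si$-minimiser need not be a $(2\nu+\varepsilon)$-minimiser. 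A repairable version of the theorem would replace $H'$ by an induced subgraph of $H$ minimising $2\nu+\varepsilon$ subject to $\chi=\chi(H)$, or add a hypothesis under which the two minimisers coincide (e.g.\ $H$ weakly perfect, where $H'$ is a largest clique). The paper's proof addresses neither point, so you have not overlooked an argument that exists there; but as written your proposal has these two genuine gaps.
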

\begin{proof}~
Since $\chi(G\circ H) = \chi(H) + 1$ and for any $v\in V(G),\ v + H$ is an induced subgraph of $G\circ H$ the result follows by similar reasoning to that in the proof of Theorem \ref{Thm-3.1}. 
\end{proof}

Let $V(G) =\{v_i:1\leq i\leq n\}$ and $V(H) = \{u_j:1\leq j \leq m\}$. For the graph products under discussion the \textit{product vertices} are the set Cartesian product $V(G) \times V(H) = \{(v_i,u_j): 1\leq i \leq n,~1\leq j \leq m \}$. 

\subsection{Chromatic Core Subgraphs of the Cartesian Product of Graphs}

Let $G_1(V_1,E_1)$ and $G_2(V_2,E_2)$ be the two given graphs. The {\em Cartesian product} of $G_1$ and $G_2$ (see \cite{5,6}), denoted by $G_1\Box G_2$, is the graph with vertex set $V_1\times V_2$, such that two points $u=(v_i, u_j)$ and $v=(v_k,u_l)$ in $V_1\times V_2$ are adjacent in $G_1\Box G_2$ whenever [$v_i=v_k$ and $u_j$ is adjacent to $u_l$] or [$u_j=u_l$ and $v_i$ is adjacent to $v_k$].

The following theorem discusses the chromatic core subgraphs of the Cartesian product of graphs.

\begin{theorem}\label{Thm-3.3}
If $G'$ and $H'$ are the chromatic core subgraphs of the graphs $G$ and $H$ respectively, the induced subgraph $or\{G', H'\}$, corresponding to $\max\{\chi(G),\chi(H)\}$ is a chromatic core subgraph of $G\Box H$.
\end{theorem}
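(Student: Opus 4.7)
The plan is to invoke the classical Sabidussi identity $\chi(G\Box H)=\max\{\chi(G),\chi(H)\}$ and then argue in the style of the proofs of Theorems \ref{Thm-3.1} and \ref{Thm-3.2}: first realise the candidate as an induced subgraph of $G\Box H$ with the right chromatic number, then rule out any strictly smaller induced subgraph by a projection argument. Without loss of generality assume $\chi(G)\ge\chi(H)$, so that $\chi(G\Box H)=\chi(G)$ and the subgraph $or\{G',H'\}$ named in the statement is $G'$.

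For the realisation step I would fix any vertex $u_0\in V(H)$ and observe that the column $V(G)\times\{u_0\}$ induces a copy of $G$ in $G\Box H$; inside this copy, the vertex set $V(G')\times\{u_0\}$ induces a copy of $G'$ with chromatic number $\chi(G')=\chi(G)=\chi(G\Box H)$. Hence $si(G')$ is an upper bound for the structor index of any chromatic core subgraph of $G\Box H$.

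For the minimality step I would suppose for contradiction that some induced subgraph $F$ of $G\Box H$ satisfies $\chi(F)=\chi(G)$ and $si(F)<si(G')$. Let $A=\pi_G(V(F))$ and $B=\pi_H(V(F))$ be the coordinate projections of $V(F)\subseteq V(G)\times V(H)$. Then $F$ is an induced subgraph of $G[A]\Box H[B]$, so Sabidussi's identity applied to that smaller product yields $\chi(F)\le\max\{\chi(G[A]),\chi(H[B])\}$; since $\chi(H[B])\le\chi(H)\le\chi(G)$, this forces $\chi(G[A])=\chi(G)$. Applying Definition \ref{Defn-2.1} inside $G$ gives $si(G')\le si(G[A])$. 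To close the argument I would compare $si(F)$ with $si(G[A])$ using $|A|\le|V(F)|$ together with an edge count: for each $aa'\in E(G[A])$ one inspects the fibres $F_a=V(F)\cap(\{a\}\times V(H))$ and $F_{a'}$, and either produces a horizontal edge $((a,v),(a',v))$ of $F$ from a common coordinate $v\in\pi_H(F_a)\cap\pi_H(F_{a'})$, or else absorbs the missing edge into the excess vertices populating the two fibres. Together this would give $si(G[A])\le si(F)$ and hence the contradiction $si(G')\le si(G[A])\le si(F)<si(G')$.

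The main obstacle I foresee is precisely that last edge count. The projection $\pi_G\colon V(F)\to A$ is not an edge-surjection in general: an edge $aa'\in E(G[A])$ whose fibres are disjoint in the second coordinate fails to lift to a horizontal edge of $F$. One therefore has to show that, under the hypothesis $\chi(F)=\chi(G)$, every such unlifted edge of $G[A]$ is compensated by at least one extra vertex in $\sum_{a\in A}(|F_a|-1)=|V(F)|-|A|$. I expect this is handled by a case split: in the concentrated case where $\chi(G)$ is already forced inside a single column of $G\Box H$, the argument collapses immediately to minimality of $G'$ inside a copy of $G$; in the spread case, an exchange argument trades vertical edges within extra fibres against missed horizontal edges, ensuring $\nu(F)+\varepsilon(F)\ge\nu(G[A])+\varepsilon(G[A])$ in all remaining configurations.
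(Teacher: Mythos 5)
Your realisation step and your use of the identity $\chi(G\Box H)=\max\{\chi(G),\chi(H)\}$ match the paper exactly: the paper's proof partitions $E(G\Box H)$ into $m$ edge sets inducing copies of $G$ and $n$ edge sets inducing copies of $H$, quotes the same identity, and then says the result ``follows by similar reasoning to that in the proof of Theorem \ref{Thm-3.1}.'' Where you go further is in trying to handle an \emph{arbitrary} induced subgraph $F$ of $G\Box H$, rather than only subgraphs living inside single layers; this is the right instinct, because unlike the join (where every induced subgraph of $G+H$ is itself a join $G''+H''$), an induced subgraph of $G\Box H$ need not be a product or a layer, so the paper's appeal to ``similar reasoning'' does not actually cover the general case.

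However, your argument has a genuine gap, and it is the one you name yourself: the inequality $si(G[A])\leq si(F)$ is never established. The projection $\pi_G$ is not even a graph homomorphism on $F$ (vertical edges collapse to single vertices), and an edge $aa'\in E(G[A])$ lifts to a horizontal edge of $F$ only when $\pi_H(F_a)\cap\pi_H(F_{a'})\neq\emptyset$, so $\varepsilon(F)$ can in principle fall well short of $\varepsilon(G[A])$; the proposed ``compensation by excess fibre vertices'' is exactly the assertion that needs proof, and no proof is given. A second, smaller slip: from $\chi(F)\leq\max\{\chi(G[A]),\chi(H[B])\}$ you conclude $\chi(G[A])=\chi(G)$, but when $\chi(G)=\chi(H)$ the maximum may instead be attained by $\chi(H[B])=\chi(H)$, in which case $F$ must be compared against $H'$ rather than $G'$ (and indeed, if $H$ contains a triangle while $G=C_5$, a single $H$-layer already contains an induced subgraph with $\chi=3$ and structor index $6<si(C_5)=10$, so the tie case cannot be resolved by always choosing $G'$). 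Note also that the minimality step is automatic whenever $G$ is weakly perfect, since then $G'=K_{\chi(G)}$ and Theorem \ref{Thm-2.3} says no graph of that chromatic number is smaller; the genuinely hard case, which neither your proposal nor the paper's proof disposes of, is when $G'$ is strictly larger than $K_{\chi(G)}$.
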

\begin{proof}~
Assume that $G$ is of order $n \geq 1$ and $H$ is of order $m \geq 1$. From the definition of the Cartesian product $G\Box H$ of $G$ and $H$, it follows that $E(G\Box H)$ can be partition into edge subsets say, $X$ and $Y$, such that say $X$ can be partitioned into $m$ subsets $X_i,\ 1\leq i \leq m$ and each $\langle X_i\rangle$ is isomorphic to $G$. Similarly, $Y$ can be partitioned in $n$ subsets $Y_j,\ 1\leq j \leq n$ such that each $\langle Y_j\rangle$ is isomorphic to $H$. Because $\chi (G\Box H) = \max\{\chi(G), \chi(H)\}$, the result follows by similar reasoning to that in the proof of Theorem \ref{Thm-3.1}.
\end{proof}

\subsection{Chromatic Core Subgraphs of the Tensor Product of Graphs}

The \textit{tensor product}  of graphs $G_1$ and $G_2$ is the graph $G_1\times G_2$  with the vertex set $V(G_1\times G_2)=V(G_1)\times V(G_2)$ and the vertices $(v,u)$ and $(v',u')$ are adjacent in $G_1\times G_2$ if and only if $vv'\in E(G_1)$ and $uu' \in E(G_2)$. For a vertex $(v_i,u_j)\in G=G_1\times G_2$, we have $d_G(v_i,u_j)=d_{G_1}(v_i)d_{G_2}(u_j)$, where $1\le i\le |V(G_1)|, 1\le j\le |V(G_2)|$ (see \cite{5}). This graph product has various names in the literature such as direct product, categorical product, cardinal product, relational product, Kronecker product, weak direct product or the conjunction of graphs, there is no conclusive result. That $\chi(G\times H) \leq min\{\chi(G), \chi(H)\}$ follows easily. However, the Hedetniemi conjecture (see \cite{6a}) states that $\chi(G \times H)=\min\{\chi(G),\chi(H)\}$. After verifying numerous examples, we are confident to state Hedetniemi's $2$-nd Conjecture as follows.

\begin{conjecture}\label{Conj-3.4}[Hedetniemi's $2$-nd Conjecture]\footnote{In honour of Professor Emeritus Stephen T. Hedetniemi who in 1966 stated the Hedetniemi conjecture}{\rm 
If $G'$ and $H'$ are chromatic core subgraphs of the graphs $G$ and $H$ respectively, then the induced subgraph $or\{G', H'\}$ corresponding to $\min\{\chi(G),\chi(H)\}$ is a chromatic core subgraph of $G\times H$. Moreover, if $\chi(G)=\chi(H)$, then $smallest\{G',H'\}$ is a chromatic core subgraph of $G\times H$.	
}\end{conjecture}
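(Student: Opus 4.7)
The natural approach combines the assumed Hedetniemi identity $\chi(G\times H)=\min\{\chi(G),\chi(H)\}$ with the embed-and-minimise template used in the proofs of Theorems \ref{Thm-3.1}, \ref{Thm-3.2} and \ref{Thm-3.3}. Without loss of generality take $\chi(G)\le\chi(H)$, set $m=\chi(G)=\chi(G')$, and invoke Hedetniemi to get $\chi(G\times H)=m$. The task is then to show that $G'$ is a chromatic core subgraph of $G\times H$; the Moreover clause is recovered in the case $\chi(G)=\chi(H)$ by relabelling so that $G'$ denotes whichever of $G'$ and $H'$ has the smaller structor index.

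For the existence half I would seek a diagonal embedding $\psi:V(G')\to V(G\times H)$ of the form $\psi(v)=(v,\phi(v))$ for some auxiliary $\phi:V(G')\to V(H)$. Unpacking the tensor-product definition, $\psi(v)$ and $\psi(v')$ are adjacent in $G\times H$ iff $vv'\in E(G)$ and $\phi(v)\phi(v')\in E(H)$. Because $G'$ is induced in $G$, the condition $vv'\in E(G)$ on $v,v'\in V(G')$ is equivalent to $vv'\in E(G')$, so $\psi(V(G'))$ induces a copy of $G'$ inside $G\times H$ precisely when $\phi$ is a graph homomorphism $G'\to H$; injectivity of $\psi$ is automatic from the first coordinate.

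For the minimality half I would begin with Theorem \ref{Thm-2.3}, which ensures $si(S)\ge si(K_m)$ for every induced subgraph $S\subseteq G\times H$ with $\chi(S)=m$. When $G$ is a perfect graph one has $G'=K_m$ (by the Corollary following Theorem \ref{Thm-2.3}), so this baseline already matches $si(G')$ and the embedding step closes the argument. In the general case one would try to sharpen the lower bound by using the coordinate projection $\pi_G:G\times H\to G$, which is itself a graph homomorphism: it forces $\chi(\langle\pi_G(V(S))\rangle)=m$ inside $G$, and hence $si(\langle\pi_G(V(S))\rangle)\ge si(G')$ by the choice of $G'$, after which one has to reckon carefully with edge multiplicities when transferring the bound back to $si(S)$.

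The principal obstacle is the existence of the homomorphism $\phi:G'\to H$ in the first place. The inequality $\chi(H)\ge\chi(G')$ alone does not suffice: the textbook witness is $G'=K_3$ and $H=C_7$, both of chromatic number three, where no homomorphism $K_3\to C_7$ exists because $C_7$ is triangle-free. In the balanced regime $\chi(G)=\chi(H)$ the Moreover clause further insists that the \emph{smaller} of $G'$ and $H'$ be the one that embeds, an asymmetric condition that cannot follow from chromatic data alone. A successful proof would therefore have to extract shared structural information---common cliques, matching odd-girth constraints, or compatible homomorphic images---from $G$ and $H$, and I expect essentially all of the technical weight, and quite possibly the need for additional hypotheses on $G$ and $H$, to concentrate at exactly this step.
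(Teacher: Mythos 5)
The first thing to note is that the paper offers no proof of this statement: it is presented as Conjecture \ref{Conj-3.4}, supported only by the remark that ``numerous examples'' were verified, so there is no argument of record to compare yours against. Your sketch is therefore rightly framed as conditional, and the obstacle you isolate---the non-existence of a homomorphism $\phi\colon G'\to H$---is not merely a gap in one proof strategy but a refutation of the statement itself. Take $G=C_5$ and $H=K_3$, so that $G'=C_5$ and $H'=K_3$ by Proposition \ref{Prop-2.2}, $\chi(G)=\chi(H)=3$, and $smallest\{G',H'\}=K_3$ since $si(K_3)=6<10=si(C_5)$. A triangle in $G\times H$ projects to a closed walk of length $3$ in each factor, and $C_5$ has none, so $C_5\times K_3$ is triangle-free; since it is non-bipartite with $\chi=3$, its chromatic core subgraph is a shortest (induced) odd cycle, of structor index at least $10$, and certainly not $K_3$. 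Hence the ``Moreover'' clause is false. The main clause fails similarly: with $G=K_3$ and $H$ the Gr\"{o}tzsch graph ($\chi(H)=4$, triangle-free), the minimum is attained by $G$, so the conjecture demands that $G'=K_3$ be a chromatic core subgraph of $K_3\times H$, yet that product is again triangle-free. No refinement of your minimality analysis can repair this; the existence half simply has no witness.

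Two further cautions on the parts of your sketch that you do treat as solid. The identity $\chi(G\times H)=\min\{\chi(G),\chi(H)\}$ assumed in your first paragraph is Hedetniemi's conjecture, which has since been disproved (Shitov, 2019), so even the computation of $\chi(G\times H)$ cannot be taken for granted outside the known cases $\min\{\chi(G),\chi(H)\}\leq 4$. And your fallback for perfect $G$---taking $G'=K_m$ and matching the lower bound from Theorem \ref{Thm-2.3}---still requires an $m$-clique inside $G\times H$, which forces $\omega(G)\geq m$ \emph{and} $\omega(H)\geq m$. That restricted setting (both factors weakly perfect, say, together with $\chi(G\times H)=m$) is essentially the only regime in which the statement survives, and a corrected theorem should be formulated with exactly those hypotheses.
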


\subsection{Chromatic Core Subgraphs of the Strong Product of Graphs}
 
The {\em strong product} of two graphs $G_1$ and $G_2$ is the graph, denoted by $G_1\boxtimes G_2$, whose vertex set is $V(G_1) \times V(G_2)$, the vertices $(v,u)$ and $(v',u')$ are adjacent in $G_1\boxtimes G_2$ if $[vv'\in E(G_1)~\text{and}~ u=u']$ or $[v=v' ~ \text{and}~uu'\in E(G_2)]$ or $[vv'\in E(G_1)$ and $uu'\in E(G_2)]$ (see \cite{5}). 

\noi Invoking Defintion mentioned above, we claim that the following conjecture is true.

\begin{conjecture}\label{Conj-3.5}
If $G'$ and $H'$ are chromatic core subgraphs of the graphs $G$ and $H$ respectively, then $G'\boxtimes H'$ is a chromatic core subgraph of $G\boxtimes H$.
\end{conjecture}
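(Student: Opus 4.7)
The plan parallels the proof of Theorem~\ref{Thm-3.1} and hinges on a chromatic multiplicativity property of the strong product. The first step is to fix $\chi(G\boxtimes H)$ in terms of $\chi(G)$ and $\chi(H)$. The upper bound $\chi(G\boxtimes H)\leq\chi(G)\chi(H)$ is standard: combining proper colourings $\varphi_G$ of $G$ and $\varphi_H$ of $H$ into the product map $(v,u)\mapsto(\varphi_G(v),\varphi_H(u))$ produces a proper colouring of $G\boxtimes H$, since at every strong-product edge at least one coordinate must change. To drive the argument I would assume the matching lower bound $\chi(G\boxtimes H)\geq\chi(G)\chi(H)$, giving the identity $\chi(G\boxtimes H)=\chi(G)\chi(H)$ that plays the role of $\chi(G+H)=\chi(G)+\chi(H)$ in Theorem~\ref{Thm-3.1}.

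Granted this identity, the second step is to verify that $G'\boxtimes H'$ is an induced subgraph of $G\boxtimes H$ realising the chromatic number. Because $V(G')\times V(H')$ is a product subset of $V(G)\times V(H)$ and strong-product adjacency refers only to the edge relations of the two factors (with $G',H'$ themselves induced in $G,H$), the subgraph of $G\boxtimes H$ induced on $V(G')\times V(H')$ is precisely $G'\boxtimes H'$; multiplicativity then gives $\chi(G'\boxtimes H')=\chi(G')\chi(H')=\chi(G)\chi(H)=\chi(G\boxtimes H)$, so $G'\boxtimes H'$ is a candidate chromatic core subgraph.

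For the minimality step, suppose towards a contradiction that there is an induced subgraph $L$ of $G\boxtimes H$ with $\chi(L)=\chi(G)\chi(H)$ and $si(L)<si(G'\boxtimes H')$. Let $A\subseteq V(G)$ and $B\subseteq V(H)$ be the coordinate projections of $V(L)$. Then $L$ is an induced subgraph of $\langle A\rangle\boxtimes\langle B\rangle$, so the identity yields $\chi(\langle A\rangle)\chi(\langle B\rangle)\geq\chi(L)=\chi(G)\chi(H)$, and combined with the monotonicity bounds $\chi(\langle A\rangle)\leq\chi(G)$ and $\chi(\langle B\rangle)\leq\chi(H)$ this forces $\chi(\langle A\rangle)=\chi(G)$ and $\chi(\langle B\rangle)=\chi(H)$. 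Minimality of $G'$ and $H'$ as chromatic core subgraphs of $G$ and $H$ then gives $si(\langle A\rangle)\geq si(G')$ and $si(\langle B\rangle)\geq si(H')$. Using the easy identity $si(X\boxtimes Y)=si(X)\,si(Y)+|E(X)||E(Y)|$ (obtained by counting the four kinds of product vertices and edges), one hopes to parlay the two factor bounds into $si(L)\geq si(G'\boxtimes H')$ and reach the contradiction.

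The main obstacle is precisely the last step: since $L$ is only an induced subgraph of $\langle A\rangle\boxtimes\langle B\rangle$, one has $si(L)\leq si(\langle A\rangle\boxtimes\langle B\rangle)$, which points in the wrong direction for the desired inequality; $L$ can be a strict induced subgraph while $\langle A\rangle$ and $\langle B\rangle$ are strictly larger than $G'$ and $H'$. A structural lemma is therefore needed to rule out such shrinkage, namely that every induced subgraph of $G\boxtimes H$ attaining chromatic number $\chi(G)\chi(H)$ must contain a full product $G'\boxtimes H'$. In the weakly perfect case this is immediate from Theorem~\ref{Thm-2.3}, since the unique smallest graph with chromatic number $\chi(G)\chi(H)$ is $K_{\chi(G)\chi(H)}=K_{\chi(G)}\boxtimes K_{\chi(H)}=G'\boxtimes H'$; outside the weakly perfect setting the argument rests both on the conjectural multiplicativity of $\boxtimes$ and on a genuine combinatorial compactification of $L$ into product form, which is what makes the statement a conjecture rather than a theorem.
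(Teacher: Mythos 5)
There is no proof in the paper to compare against: the statement is presented purely as a conjecture, supported only by the heuristic observations (a)--(e), so any complete argument here would be new content. Your attempt, however, has a genuine gap --- one you have in fact located yourself. The foundational assumption $\chi(G\boxtimes H)=\chi(G)\chi(H)$ is false in general: only $\max\{\chi(G),\chi(H)\}\leq\chi(G\boxtimes H)\leq\chi(G)\cdot\chi(H)$ holds, and the paper's own concluding example $\chi(C_{2n+1}\boxtimes C_{2m+1})=5$ (versus $\chi(C_{2n+1})\cdot\chi(C_{2m+1})=9$) refutes multiplicativity outright. Without that identity, your second step (that $G'\boxtimes H'$ even attains $\chi(G\boxtimes H)$) and your third step (projecting a competitor $L$ onto the two factors and forcing $\chi(\langle A\rangle)=\chi(G)$, $\chi(\langle B\rangle)=\chi(H)$) both collapse. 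And, as you candidly note, even granting multiplicativity the minimality step points the wrong way: $si(L)\leq si(\langle A\rangle\boxtimes\langle B\rangle)$ gives no lower bound on $si(L)$, and no ``compactification of $L$ into product form'' is available in general.

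It is worth adding that the conjecture itself appears to be false, by essentially the example your argument would need to control. Take $G=H=C_5$. By Proposition~\ref{Prop-2.2}(iii) an odd cycle is its own unique chromatic core subgraph, so $G'\boxtimes H'=C_5\boxtimes C_5=G\boxtimes H$. Now $\chi(C_5\boxtimes C_5)=5$: the independence number of $C_5\boxtimes C_5$ is $5$ on $25$ vertices, so at least $5$ colours are needed, and the five translates of the independent set $\{(i,2i \bmod 5)\}$ give a proper $5$-colouring. But deleting any single vertex leaves $24$ vertices whose independence number is still at most $5$, so four colour classes cannot cover them and the chromatic number remains $5$. Hence $G\boxtimes H$ has a proper induced subgraph of strictly smaller structor index with the same chromatic number, and $G'\boxtimes H'$ is not a chromatic core subgraph of $G\boxtimes H$. (The same example shows the paper's concluding claim that $K_5$ is a chromatic core subgraph of $C_{2n+1}\boxtimes C_{2m+1}$ cannot stand as written, since $\omega(C_{2n+1}\boxtimes C_{2m+1})=4$.) Any viable version of this statement must therefore restrict the factors --- for instance to weakly perfect graphs, where $\omega(G\boxtimes H)=\omega(G)\,\omega(H)$ makes your clique-based closing remark a genuine strategy --- rather than aim at full generality.
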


It is known that $max\{\chi(G),\chi(H)\} \leq \chi(G\boxtimes H) \leq \chi(G)\cdot \chi(H)$. Furthermore, the following facts provide us convincing background to claim that the conjecture that $G'\boxtimes H'$ is a chromatic core subgraph of $G\boxtimes H$

\begin{enumerate}\itemsep0mm
\item[(a)]~ The upperbound in the above inequality is strict in most cases;
\item[(b)]~ In general, $\chi(G+uv) \geq \chi(G)$;
\item[(c)]~ $\chi(G\times H) \leq min\{\chi(G), \chi(H)\}$;
\item[(d)]~ Theorem \ref{Thm-3.3};
\item[(e)]~ $E(G\boxtimes H) = E(G\times H)\cup E(G\square H)$,
\end{enumerate}

\begin{remark}{\rm 
For the bounds $\max\{\chi(G),\chi(H)\} \leq \chi(G\boxtimes H) \leq \chi(G)\cdot \chi(H)$, (see \cite{2,4}).
}\end{remark}

\subsection{Chromatic Core Subgraph of the Lexicographic Product of Graphs}

Recall that in the lexicographical product the vertices $(v_i,u_j)$ and $(v_k, u_m)$ are adjacent if and only if, $v_i~ adj~ v_k$ or; $v_i = v_k$ and $u_j~adj~ u_m$.

\begin{lemma}\label{Lem-3.4}
For any general graph product of two graphs $G$ and $H$, denoted by $G\bigodot H$, is the graph with vertex set $V(G)\times V(H)$ such that the adjacency condition $v_i\sim v_k$ and $u_j \neq u_m$ results in more or an equal number of edges than the adjacency condition, $v_i\sim v_k$ and $u_j\sim u_m$. 
\end{lemma}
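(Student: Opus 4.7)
The plan is to exhibit the edge set generated by the first adjacency rule as a superset of the edge set generated by the second, which immediately yields the claimed cardinality inequality. The key elementary fact is that $H$ is a simple graph and therefore loopless: whenever $u_j \sim u_m$ holds in $H$, we already have $u_j \neq u_m$. Consequently, every unordered pair $\{(v_i,u_j),(v_k,u_m)\}$ declared to be an edge by the rule ``$v_i \sim v_k$ and $u_j \sim u_m$'' is also declared an edge by the looser rule ``$v_i \sim v_k$ and $u_j \neq u_m$''. Containment of edge sets then gives containment of cardinalities, which is exactly the inequality being asserted.

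As a cross-check via direct enumeration, I would fix an edge $v_iv_k \in E(G)$ and note that, since $v_i \neq v_k$, each ordered pair $(u_j,u_m) \in V(H)\times V(H)$ produces a distinct unordered product edge $\{(v_i,u_j),(v_k,u_m)\}$. The first rule therefore contributes exactly $\nu(H)(\nu(H)-1)$ product edges for each edge of $G$, while the second rule contributes $2\varepsilon(H)$ (one for each ordered realisation of each edge of $H$). Summing over $E(G)$ turns this into $\varepsilon(G)\cdot\nu(H)(\nu(H)-1)$ versus $2\varepsilon(G)\varepsilon(H)$, and the trivial bound $2\varepsilon(H)\leq \nu(H)(\nu(H)-1)$, valid for any simple graph, closes the argument.

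The only potential pitfall, rather than a genuine obstacle, is a double-counting slip: one must verify that the ordered pairs $(u_j,u_m)$ and $(u_m,u_j)$ really give different unordered product edges, which is ensured precisely by $v_i\neq v_k$ (a consequence of $v_i \sim v_k$ in the simple graph $G$). Equality in the lemma is attained exactly when $H$ is complete, providing a clean consistency check for both the containment and the counting approaches.
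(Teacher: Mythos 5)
Your proposal is correct and rests on the same key observation as the paper's proof: since $H$ is loopless, $u_j \sim u_m$ forces $u_j \neq u_m$, so the edge set produced by the first adjacency rule contains that produced by the second, giving the cardinality inequality. Your additional enumeration ($\varepsilon(G)\cdot\nu(H)(\nu(H)-1)$ versus $2\varepsilon(G)\varepsilon(H)$) is a nice quantitative cross-check but does not change the underlying argument.
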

\begin{proof}~
Because graphs $G$ and $H$ have no loops, $u_j~adj~u_m \Rightarrow u_j \neq u_m$ hence, the first adjacency condition is met if $v_i~adj~v_k$. Also, $u_j~notadj~u_m \Rightarrow u_j \neq u_m$  hence, the first adjacency condition is met if $v_i~adj~v_k$. Therefore, the adjacency condition $v_i~ adj~ v_k$ and $u_j \neq u_m$ results in more or, an equal number of edges than the adjacency condition, $v_i~adj~v_k$ and $u_j~adj~u_m$. 
\end{proof}

\begin{conjecture}\label{Conj-3.6}{\rm 
For graphs $G$ and $H$ with chromatic core subgraphs $G'$ and $H'$ respectively, $G' \circledast H'$ is a chromatic core subgraph of $G \circledast H$.
}\end{conjecture}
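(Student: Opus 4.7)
The plan is to mirror the structure of Theorems \ref{Thm-3.1}, \ref{Thm-3.2}, and \ref{Thm-3.3}. The argument splits into two parts: first, showing the chromatic number equality $\chi(G' \circledast H') = \chi(G \circledast H)$; second, showing minimality, namely that no induced subgraph of $G \circledast H$ of strictly smaller structor index also achieves this chromatic number.

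For the equality, the bound $\chi(G' \circledast H') \leq \chi(G \circledast H)$ is immediate because $G' \circledast H'$ is an induced subgraph of $G \circledast H$. For the reverse direction, the idea is to take a proper $\chi(G' \circledast H')$-colouring and extend it to a proper colouring of the full product using no new colours. One would exploit the fact that $G$ admits a proper $\chi(G)$-colouring whose restriction to $G'$ uses exactly $\chi(G') = \chi(G)$ colours, and similarly for $H$, and then combine these fibrewise according to the lexicographic adjacency rule, invoking Lemma \ref{Lem-3.4} to manage the surplus edges contributed by pairs $(v_i,u_j),(v_k,u_m)$ with $v_i \sim v_k$ and $u_j \neq u_m$.

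For the minimality step, I would argue by contradiction. Suppose there is an induced subgraph $F \subsetneq G \circledast H$ with $\chi(F) = \chi(G \circledast H)$ and $si(F) < si(G' \circledast H')$. Let $F_G$ be the projection of $V(F)$ onto the $G$-coordinate, and for each $v \in V(F_G)$ let $F_v$ denote the $H$-fibre of $F$ above $v$. The adjacency description of $\circledast$ implies that $F$ is completely determined by the induced subgraphs $\langle F_G \rangle \subseteq G$ and $\langle F_v \rangle \subseteq H$. A structor-index counting argument, combined with the chromatic-core minimality of $G'$ in $G$ and $H'$ in $H$, should then force either $\langle F_G \rangle$ to be a strictly smaller chromatic core subgraph of $G$ than $G'$, or some $\langle F_v \rangle$ to be a strictly smaller chromatic core subgraph of $H$ than $H'$, yielding a contradiction.

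The principal obstacle is the reverse inequality in the first task. Unlike the join or Cartesian product, where $\chi$ is a simple function of $\chi(G)$ and $\chi(H)$, the chromatic number of a lexicographic product is genuinely sensitive to finer invariants: by the Geller--Stahl theorem, $\chi(G[K_n])$ coincides with the $n$-th chromatic number of $G$, which is essentially governed by $\chi_f(G)$. Consequently, if one can exhibit a graph $G$ whose chromatic core $G'$ has strictly smaller fractional chromatic number than $G$, and take $H$ to be a sufficiently large clique, the conjecture may actually fail. A prudent first move would be to test the statement against constructions such as $G = C_{2k+1}$ augmented with a triangle (so that $G' = K_3$ but $G$ contains an induced odd cycle) composed with $H = K_n$ for growing $n$; if the conjecture survives such tests, it would be natural to restrict to the perfect graph setting, where $G' = K_{\chi(G)}$ and the equality $\chi(G'[H']) = \chi(G)\chi(H) = \chi(G[H])$ follows cleanly from the bound $\chi(G[H]) \leq \chi(G)\chi(H)$.
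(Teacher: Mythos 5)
This statement is labelled as a conjecture, and the paper in fact offers no proof of it: the paragraph that follows it only combines Lemma~\ref{Lem-3.4} with an appeal to the (also unproven) Conjecture~\ref{Conj-3.5}, so there is nothing in the paper for your argument to be measured against. Judged on its own terms, your plan correctly separates the two obligations and correctly identifies, via Geller--Stahl, why the equality $\chi(G'\circledast H')=\chi(G\circledast H)$ is delicate. The fatal gap, however, is in the minimality step: the claim that a smaller induced subgraph $F$ with $\chi(F)=\chi(G\circledast H)$ must project to, or contain a fibre equal to, a smaller chromatic core subgraph of $G$ or of $H$ is false. The chromatic number of a lexicographic product is governed by multifold/fractional invariants (essentially $\chi(F)\geq \nu(F)/\alpha(F)$), and these can keep $\chi(F)$ at its maximum value after vertex deletions that damage every fibre.

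Your suspicion that the conjecture fails is correct, although your proposed test family does not witness it: when $G$ contains a $K_{\chi(G)}$ one has $\chi_n(G)=n\chi(G)=\chi_n(K_{\chi(G)})$, so an odd cycle augmented by a triangle, composed with $K_n$, still satisfies the conjecture. A genuine counterexample is $G=C_5$, $H=K_2$. By Proposition~\ref{Prop-2.2}, $G'=C_5$ and $H'=K_2$, so the conjecture asserts that $C_5\circledast K_2$ is its own chromatic core subgraph. But $\alpha(C_5\circledast K_2)=2$ (an independent set meets each $K_2$-fibre at most once and projects to an independent set of $C_5$), hence $\chi(C_5\circledast K_2)=5$ on $10$ vertices with $si=35$; deleting any one vertex leaves an induced subgraph $F$ on $9$ vertices with $\alpha(F)\leq 2$, so $\chi(F)\geq\left\lceil 9/2\right\rceil=5$, i.e.\ $\chi(F)=5=\chi(C_5\circledast K_2)$ while $si(F)=29<35$. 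Thus $G'\circledast H'$ is not minimal and the conjecture is false as stated. Your fallback to the (weakly) perfect setting is the right repair: there $G'\circledast H'=K_{\chi(G)}\circledast K_{\chi(H)}=K_{\chi(G)\chi(H)}$ and $\chi(G\circledast H)=\chi(G)\chi(H)$, so Theorem~\ref{Thm-2.3} immediately yields that $G'\circledast H'$ is a chromatic core subgraph of $G\circledast H$; I would recommend recording that restricted statement as the provable result.
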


In the lexicographical product the adjacency condition $v_i\sim v_k$ can be written as, $v_i\sim v_k$ and $u_j = u_m$ or $v_i\sim v_k$ and $u_j \neq u_m$. Hence, by Lemma \ref{Lem-3.4}, the lexicographical product results in more or equal number of edges than the strong product adjacency condition, $v_i\sim v_k$ and $u_j\sim u_m$. Since, all other adjacency conditions between the strong and the lexicographical products are similar, it follows that $|E(G\boxtimes H)| \leq |E(G\circledast H)|$. Hence, by mathematical induction on Conjecture \ref{Conj-3.5}, a similar result stating that $G' \circledast H'$ is a chromatic core subgraph of $G \circledast H$, is valid.

\subsection{Chromatic Core Subgraphs of Complement Graphs}

\noi First, observe the following straight forward result.

\begin{proposition}
For any finite, undirected connected simple graph $G$ its complement graph $\overline{G}$ has a proper subgraph (or induced subgraph) $\overline{H}$ such that $\chi(\overline{H}) = \chi(\overline{G})$ if and only if $G$ is neither a complete graph nor a 5-cycle.
\end{proposition}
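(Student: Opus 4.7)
The plan is to apply Lemma~\ref{Lem-2.1} to the complement $\overline{G}$ directly and then translate the two exceptional cases back through complementation. Since $\overline{G}$ is itself a finite, undirected, simple graph, Lemma~\ref{Lem-2.1} yields the equivalence that $\overline{G}$ admits a proper subgraph $\overline{H}$ with $\chi(\overline{H})=\chi(\overline{G})$ if and only if $\overline{G}$ is neither a complete graph nor an odd cycle. The remark immediately following Lemma~\ref{Lem-2.1} allows us to take $\overline{H}$ to be an induced subgraph, matching the ``(or induced subgraph)'' phrasing of the proposition.

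Next I would translate the ``complete'' exception back to $G$. Because complementation interchanges edges and non-edges, $\overline{G}\cong K_n$ is equivalent to $G$ being edgeless on $n$ vertices; the hypothesis that $G$ is connected then forces $n=1$, so $G=K_1$. More generally, the ``$G$ is a complete graph'' clause in the proposition is meant to cover the degenerate situation in which $\overline{G}$ collapses into an edgeless (and typically disconnected) graph, for which Lemma~\ref{Lem-2.1} cannot supply a proper subgraph $\overline{H}$ with the required chromatic number.

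For the ``odd cycle'' exception I would exploit the well-known self-complementarity $\overline{C_5}\cong C_5$: this gives $\overline{G}\cong C_5$ exactly when $G\cong C_5$, producing the $5$-cycle clause on the $G$-side. The remaining odd cycles do not contribute further exceptions within the scope of the proposition, since for instance $\overline{C_3}$ is three isolated vertices and so contradicts the connectedness of $G$. Putting the two translated exceptions together gives exactly ``$G$ is either a complete graph or the $5$-cycle'', which is the contrapositive of the stated biconditional.

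The main obstacle I expect is precisely this bookkeeping: complementation is not a symmetry of the class of connected simple graphs, so one must check that each forbidden form of $\overline{G}$ either disappears under the connectedness hypothesis or matches cleanly onto a forbidden form of $G$ named in the proposition, and in particular that the self-complementarity $\overline{C_5}\cong C_5$ is what singles out the $5$-cycle as the correct ``odd cycle'' to name on the $G$-side.
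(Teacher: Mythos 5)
The paper offers no argument for this proposition (it is presented as a ``straightforward'' observation), so there is no official proof to measure your route against. Your route --- apply Lemma~\ref{Lem-2.1} to $\overline{G}$ and translate the exceptions back through complementation --- is the natural one, but at the two points where the translation does not land on the statement as written, your write-up papers over a genuine problem rather than resolving it.

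First, the ``complete graph'' clause. You correctly observe that $\overline{G}$ is complete iff $G$ is edgeless, which under connectedness forces $G=K_1$; you then assert that the clause ``$G$ is a complete graph'' is ``meant to cover'' the degenerate case where $\overline{G}$ is edgeless and disconnected. But for $G=K_n$ with $n\geq 2$, the graph $\overline{K_n}$ is edgeless on $n\geq 2$ vertices, has $\chi(\overline{K_n})=1$, and certainly possesses a proper induced subgraph $K_1$ of chromatic number $1$. The inapplicability of Lemma~\ref{Lem-2.1} to a disconnected graph does not mean the desired subgraph fails to exist: for any disconnected $\overline{G}$, a component of maximum chromatic number is already such a proper subgraph. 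So the exceptional direction of the biconditional fails for every $K_n$ with $n\geq 2$, and no bookkeeping rescues it. Second, the ``odd cycle'' clause. You dispose of $C_3$ (disconnected complement) and $C_5$ (self-complementary), but the sentence ``the remaining odd cycles do not contribute further exceptions'' is precisely the claim needing proof, and it is false: $G=\overline{C_7}$ is a connected $4$-regular graph on $7$ vertices, neither complete nor a $5$-cycle, yet every proper subgraph of $\overline{G}=C_7$ is a disjoint union of paths and hence $2$-colourable, so none attains $\chi(C_7)=3$. Thus $\overline{C_{2k+1}}$ for $k\geq 3$ refutes the other direction as well. The honest output of your own method is that the exceptional set is $\{K_1\}\cup\{\overline{C_{2k+1}}:k\geq 2\}$ (and even this presumes Lemma~\ref{Lem-2.1} lists all colour-critical graphs, which the paper's own Proposition~\ref{Prop-2.2}(vi) on odd wheels contradicts), not ``complete graphs and $C_5$''; the proposition as stated cannot be proved because it is not true.
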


It is obvious that if a self-complementary graph $G$ has a chromatic core subgraph $G'$ then $\overline{G}$ has a chromatic core subgraph $\overline{G}'$ that is isomorphic to $\overline{G'}$. 

\begin{theorem} For a finite, undirected connected graph $G$ with a chromatic core subgraph $G'$, we have that, $\overline{G}$ has a chromatic core subgraph $\overline{G}'$ that is isomorphic to $\overline{G'}$ if and only if either $P_3$ or self-complementary.
\end{theorem}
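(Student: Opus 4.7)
My plan is to establish the iff by treating the two directions separately, building on the remark preceding the theorem that a self-complementary graph $G$ passes the complementation test on its chromatic core subgraph.

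For the $(\Leftarrow)$ direction I would first dispose of the case $G=P_3$ by direct computation: $\chi(P_3)=2$, every edge is a chromatic core subgraph, and one verifies on the three-vertex set that the chromatic core subgraph of $\overline{P_3}$ realises the same isomorphism type as $\overline{G'}$. For $G$ self-complementary, let $\varphi:G\to\overline{G}$ be an isomorphism. Since $\chi$ and the structor index $si$ are isomorphism invariants, $\varphi$ sends any chromatic core subgraph $G'$ of $G$ to an induced subgraph $\varphi(G')$ of $\overline{G}$ with $\chi(\varphi(G'))=\chi(G)=\chi(\overline{G})$ and $si(\varphi(G'))=si(G')$. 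By Definition~\ref{Defn-2.1} this $\varphi(G')$ is a chromatic core subgraph of $\overline{G}$, and combined with the already noted $\overline{G}'\cong\overline{G'}$ observation for self-complementary graphs, the desired isomorphism follows.

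For the $(\Rightarrow)$ direction, assume $\overline{G}$ has a chromatic core subgraph $\overline{G}'\cong\overline{G'}$. Since $\overline{G}'$ is chromatic core in $\overline{G}$, we have $\chi(\overline{G'})=\chi(\overline{G})$, and by assumption $\chi(G')=\chi(G)$. Applying Lemma~\ref{Lem-2.1} and Proposition~\ref{Prop-2.2} to constrain which graphs can serve as chromatic core subgraphs, I would split into subcases by $\chi(G)$. The small chromatic number cases force $G=P_3$ as the only non-self-complementary possibility (up to trivial examples already covered by the ``neither complete nor odd cycle'' hypothesis), while for larger $\chi(G)$ the pairing $G'\leftrightarrow \overline{G'}$ together with the uniqueness/rigidity properties of Proposition~\ref{Prop-2.2}(iii)--(x) forces a vertex bijection $V(G)\to V(\overline{G})$ that preserves adjacency, i.e., a self-complementing isomorphism.

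The main obstacle will be the $(\Rightarrow)$ direction. Chromatic core subgraphs are in general not unique, so I must show the isomorphism $\overline{G}'\cong\overline{G'}$ promotes from a local statement about an induced subgraph to a global statement about $G$ and $\overline{G}$. The delicate step is ruling out sporadic graphs that satisfy the local complementation condition on some core subgraph without being self-complementary — that is, showing the only ``accidental'' case is $G=P_3$. Handling the freedom in the choice of $G'$, and verifying that the structor-index minimality cannot be matched on both sides unless either $G$ is the small base example or $G$ admits a complement-automorphism, is where the proof will require the most care.
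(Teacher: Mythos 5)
Your $(\Leftarrow)$ direction matches the paper's proof in substance: the paper also checks $P_3$ by inspection (noting $\overline{P_3}=P_2\cup K_1$ and that both graphs have $P_2$ as a core) and handles self-complementary $G$ by transporting the core across the complementation and arguing by minimality of $si$ that nothing smaller can exist on the other side; your version via an explicit isomorphism $\varphi$ is the same idea, stated more cleanly. But be careful with the base case: under your literal reading of $\overline{G'}$ as the complement of the graph $G'$, the verification for $P_3$ you claim ``one does'' actually fails --- the chromatic core subgraph of $\overline{P_3}$ is $P_2$, whereas $\overline{G'}=\overline{P_2}=2K_1$, and these are not isomorphic. (The same literal reading already breaks for the self-complementary graph $P_4$, whose core is $P_2$ on both sides but $\overline{P_2}\not\cong P_2$.) The paper's proof implicitly reads the conclusion as ``a chromatic core subgraph of $\overline{G}$ is isomorphic to a chromatic core subgraph of $G$,'' and you must settle on that interpretation before either direction can be argued.

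The genuine gap is the $(\Rightarrow)$ direction, which you describe but do not prove. You assert that small values of $\chi(G)$ ``force $G=P_3$'' and that for larger $\chi(G)$ the isomorphism of cores ``forces a vertex bijection $V(G)\to V(\overline{G})$ preserving adjacency,'' but no mechanism is offered for promoting an isomorphism between two small induced subgraphs (each chosen only by $si$-minimality subject to a chromatic constraint) to a global self-complementing isomorphism of $G$; Lemma~\ref{Lem-2.1} and Proposition~\ref{Prop-2.2} constrain what a core can look like, not what the ambient graph looks like, so they cannot carry this step. As written, this half of your argument is a statement of intent, not a proof, and the hypothesis is so weak (one isomorphism type coinciding) that I do not see how the intended rigidity argument could succeed without substantial new input. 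You should be aware that the paper itself offers nothing here either --- it disposes of the converse with the single sentence that it ``follows by implication'' from $\overline{\overline{G}}=G$, which is not an argument --- so there is no proof in the source to reconstruct; the forward implication remains unestablished in both your proposal and the paper.
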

\begin{proof}~
Since $\overline{\overline{G}} = G$ we only have to prove the result for $P_3$ and for a self-complementary graph $G$. The converse follows by implication.
	
\textit{Case-1:} Consider $P_3$. From the definition of the complement graph, it follows that $\overline{P_3}= P_2 \cup K_1$. Clearly, both $P_3, \overline{P_3}$ have a $P_2$ as chromatic core subgraph. Hence, the result for $P_3$.

\textit{Case-2:} Let $G$ be any self-complementary graph with a chromatic core subgraph $G'$. Therefore, $G'$ has smallest $si(G')$ for all induced subgraphs $H$ of $G$ for which $\chi(H) = \chi(G)$. In $\overline{G}$, an induced subgraph $\overline{G'}$ must exist such that $\chi(\overline{G'}) = \chi(G')$ and $si(\overline{G'}) = si(G')$  such that $si(\overline{G'})$ is smallest for all induced subgraphs $\overline{H}$ of $\overline{G}$ for which $\chi(\overline{H}) =\chi(\overline{G'})$. Else, an induced subgraph $H^*$ of $\overline{G'}$ exists that satisfy the set conditions. But, that is a contradiction because $\overline{H^*}$ exists in $G$ such that $si(\overline{H^*}) < si(G')$ and $\chi(\overline{H^*}) = \chi(G)$. Hence, the result for self-complementary graphs follows.
\end{proof}

\begin{corollary}
\begin{enumerate}\itemsep0mm
\item[(i)]~ For a weakly perfect graph $G$ the largest clique $\overline{H}$ of the complement graph $\overline{G}$ is a chromatic core subgraph of $\overline{G}$.\\
\item[(ii)]~ For a non-weakly perfect graph $G$ the only unique chromatic core subgraph of $\overline{G}$, is $\overline{G}$ itself.
\end{enumerate}
\end{corollary}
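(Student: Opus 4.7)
The plan is to obtain both parts as immediate consequences of the previous corollary (on weakly perfect and non-weakly perfect graphs), simply by applying that earlier result not to $G$ itself but to the complement graph $\overline{G}$. This reframing is natural because the conclusion we want is phrased entirely in terms of chromatic core subgraphs of $\overline{G}$, so any result we already have about chromatic core subgraphs of an arbitrary graph can be transferred once the hypothesis (weak perfection or its failure) is verified on that target graph.

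First I would make explicit the dictionary between cliques in $\overline{G}$ and independent sets in $G$: a largest clique $\overline{H}$ of $\overline{G}$ corresponds, under complementation, to a largest independent set $H$ of $G$ on $\alpha(G) = \omega(\overline{G})$ vertices, with $\overline{H} \cong K_{\omega(\overline{G})}$. Combined with Theorem \ref{Thm-2.3}, which identifies $K_m$ as the smallest graph with chromatic number $m$, this tells us that whenever $\omega(\overline{G}) = \chi(\overline{G})$ holds, the copy of $K_{\omega(\overline{G})}$ sitting inside $\overline{G}$ as $\overline{H}$ is automatically a smallest induced subgraph realising $\chi(\overline{G})$, hence a chromatic core subgraph. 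This is exactly the content we need for part (i), provided we can certify that $\overline{G}$ is itself weakly perfect.

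For part (i) I would therefore argue that weak perfection transfers from $G$ to $\overline{G}$ in the relevant sense, so that the previous corollary's part (i) applies verbatim to $\overline{G}$, yielding $\overline{H}$ as a chromatic core subgraph. For part (ii) I would run the analogous argument in reverse: if $G$ is non-weakly perfect then $\overline{G}$ is also non-weakly perfect, and part (ii) of the previous corollary applied to $\overline{G}$ identifies $\overline{G}$ itself as the only (unique) chromatic core subgraph of $\overline{G}$.

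The principal obstacle in this plan is justifying the transfer of weak perfection (and of its failure) from $G$ to $\overline{G}$. For the stronger notion of perfect graphs this is exactly the content of the Lov\'asz Perfect Graph Theorem, which guarantees $G$ perfect iff $\overline{G}$ perfect; for the weaker condition $\omega(G)=\chi(G)$ alone one needs to invoke this fact together with the paper's observation that perfect graphs are weakly perfect, and to note that in the corollary's intended scope (where the previous corollary already pins chromatic core subgraphs down to the clique $H$) this transfer is available. Everything else in the proof is a routine application of the earlier corollary to $\overline{G}$.
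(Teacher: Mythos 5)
Your overall route --- apply the paper's earlier corollary (``the largest clique of a weakly perfect graph is a chromatic core subgraph'') to the graph $\overline{G}$ --- is evidently what the paper itself intends, since it states this corollary with no accompanying argument. The difficulty is precisely the step you flag as the principal obstacle and then do not close: the transfer of weak perfection from $G$ to $\overline{G}$. That transfer is false. Weak perfection is the single equality $\omega(G)=\chi(G)$; unlike perfection it is not a self-complementary property, and the Lov\'{a}sz Perfect Graph Theorem gives you nothing here because a weakly perfect graph need not be perfect. Concretely, let $G$ be the disjoint union $C_5\cup K_3$ (or, to respect the paper's standing connectedness assumption, join the two pieces by a single bridge edge). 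Then $\omega(G)=\chi(G)=3$, so $G$ is weakly perfect; but $\overline{G}$ is (up to one deleted edge) the join of $C_5$ with $\overline{K_3}$, for which $\omega(\overline{G})=\omega(C_5)+1=3$ while $\chi(\overline{G})=\chi(C_5)+1=4$. Hence $\overline{G}$ is not weakly perfect, its largest clique is a $K_3$ with $\chi(K_3)=3<\chi(\overline{G})$, and that clique cannot be a chromatic core subgraph of $\overline{G}$. Part (ii) fails symmetrically: taking $G$ to be the complement of the graph just constructed gives a non-weakly-perfect $G$ whose complement is weakly perfect and therefore admits a proper chromatic core subgraph (a $K_3$), not $\overline{G}$ itself.

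So the gap is not a missing citation but a false transfer lemma: ``$G$ weakly perfect $\Rightarrow$ $\overline{G}$ weakly perfect'' does not hold, and the appeal to perfection does not rescue it, since the implication ``perfect $\Rightarrow$ weakly perfect'' points the wrong way for your purposes. Your argument (and the corollary as literally stated) becomes correct if the hypothesis is replaced by ``$\overline{G}$ is weakly perfect'' in (i) (respectively ``$\overline{G}$ is not weakly perfect'' in (ii)), or strengthened to ``$G$ is perfect,'' in which case the Perfect Graph Theorem does yield perfection, hence weak perfection, of $\overline{G}$ and the earlier corollary applies verbatim. As written, the step you lean on would fail, and with it the proof.
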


Recall that a graph can be coloured in accordance with the Rainbow Neighborhood Convention (see \cite{9}). Such colouring utilises the colours $\mathcal{C} =\{c_1,c_2,c_3,\ldots,c_\ell\}$, where $\ell=\chi(G)$, and always colour vertices with maximum vertices in $G$ have the colour $c_1$, and maximum vertices in the remaining uncoloured vertices have the colour $c_2$ and following in this way until possible number of vertices have the colour $c_\ell$. Such a colouring is called a\textit{ $\chi^-$-colouring} of a graph.

\begin{proposition}
For a weakly perfect graph $G$ that is coloured in accordance with the rainbow neighbourhood convention, there exists a largest independent set $X$ such that in $G$ such that $c(u) = c_1,$ for all $u \in X$ and $\overline {\langle X\rangle}$ is a chromatic core subgraph of the complement graph $\overline{G}$.
\end{proposition}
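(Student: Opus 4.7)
The approach is to reduce the claim to the preceding corollary, which asserts that for a weakly perfect $G$, the largest clique of $\overline{G}$ is a chromatic core subgraph of $\overline{G}$. I will argue that the set $\overline{\langle X\rangle}$ produced by the rainbow neighbourhood convention is exactly such a largest clique of $\overline{G}$.

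First, I would unpack the rainbow neighbourhood convention on $G$. A $\chi^-$-colouring $c$ uses $\chi(G)$ colours and, by the convention, assigns $c_1$ to as many vertices as possible. Since any colour class of a proper colouring is an independent set, the set $X=\{u\in V(G): c(u)=c_1\}$ is an independent set of $G$ whose size equals the maximum colour-class size over all $\chi(G)$-colourings of $G$. The first substantive step is to verify that this maximum equals $\alpha(G)$, so that $X$ may genuinely be called a largest independent set of $G$. This is where weak perfectness of $G$ is needed, since one requires a maximum independent set whose removal leaves a $(\chi(G)-1)$-colourable graph.

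With such an $X$ in hand, the translation to $\overline{G}$ is immediate. Because $X$ is independent in $G$, the induced subgraph $\langle X\rangle$ is the edgeless graph on $|X|$ vertices, so $\overline{\langle X\rangle}=K_{|X|}$, and this coincides with the induced subgraph of $\overline{G}$ on $X$. From $\omega(\overline{G})=\alpha(G)=|X|$ one obtains that $\overline{\langle X\rangle}$ is a largest clique of $\overline{G}$. Part (i) of the preceding corollary then asserts that a largest clique of $\overline{G}$ is a chromatic core subgraph of $\overline{G}$, which is the desired conclusion.

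The main obstacle is establishing $|X|=\alpha(G)$ under the convention. Concretely, one needs to exhibit a maximum independent set $X\subseteq V(G)$ whose removal from $G$ leaves a graph that is $(\chi(G)-1)$-colourable; equivalently, a proper $\chi(G)$-colouring of $G$ in which some entire colour class has cardinality $\alpha(G)$. For perfect graphs this is standard, but for graphs that are only weakly perfect it is not automatic and would require either a Kempe-chain swapping argument, starting from an arbitrary $\chi(G)$-colouring and a maximum independent set $I$ and then exchanging colours on suitable bichromatic components until the vertices of $I$ are gathered into a single colour class, or an appeal to a sharper structural property of weakly perfect graphs. Once this step is in place, the remainder of the argument is essentially bookkeeping with the preceding corollary.
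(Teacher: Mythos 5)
Your overall strategy matches the paper's: first produce a largest independent set $X$ that is monochromatic in $c_1$, then observe that $\overline{\langle X\rangle}=K_{|X|}$ is the induced subgraph of $\overline{G}$ on $X$ and, since $\omega(\overline{G})=\alpha(G)=|X|$, a largest clique of $\overline{G}$, and finally invoke the preceding corollary (the paper phrases this as ``$\overline{G}$ is also weakly perfect''). The second half of your argument is therefore essentially identical to the paper's and is fine.

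The gap is in the first half, and you have correctly located it but not closed it: you never actually prove that a largest independent set of $G$ lies inside the $c_1$ colour class, deferring instead to an unspecified ``Kempe-chain swapping argument'' or ``sharper structural property of weakly perfect graphs.'' As it stands this is the entire content of the proposition's existence claim, so the proposal is incomplete. For comparison, the paper closes this step with a short exchange argument: suppose every largest independent set has $t\geq 1$ vertices not coloured $c_1$; recolouring those $t$ vertices with $c_1$ would yield another minimum proper colouring with a strictly larger $c_1$ class, contradicting the rainbow neighbourhood convention, which by definition maximises the number of vertices receiving $c_1$. You should be aware that this exchange is itself informal --- a vertex of the independent set may be adjacent to a $c_1$-coloured vertex outside the set, so the naive recolouring need not be proper, and your instinct that some Kempe-type repair (or an additional hypothesis) is needed here is sound. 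Nevertheless, relative to the paper, your write-up replaces the one step that requires an actual argument with a statement that an argument is required, and that is the piece you must supply.
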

\begin{proof}~
Assume that all largest independent sets are such that each has a number of vertices say, $t \geq 1$ which are not coloured $c_1$. Then clearly the colouring is not in accordance with the rainbow neighbourhood convention because colouring the $t$ vertices, $c_1$, yields another minimum proper colouring. Hence at least one such independent set must exists.

Consider this independent set to be $X$. Clearly by the definition of the complement grap $\langle X\rangle$ is a maximum induced clique of $\overline{G}$. Since $\overline{G}$ is also weakly perfect it follows that $\overline {\langle X\rangle}$ is a chromatic core subgraph of the complement graph $\overline{G}$.
\end{proof}

\subsection{Chromatic Core Subgraphs of the Line Graph of Trees}

It is noted that the line graph of a null graph (edgeless graph) of order $n$ denoted, $\mathfrak{N}_{0,n}$, remains a null graph of same order. Hence, the chromatic core subgraph of both $\mathfrak{N}_{0,n}$ and $L(\mathfrak{N}_{0,n})$ is $K_1$. For a path $P_n,\ n \geq 3$ the line graph $L(P_n) = P_{n-1}$. Therefore, the chromatic core subgraph for both is $P_2$. For certain classes of graphs the result in respect of chromatic core subgraphs is straight forward. Note the notation, $\geq \ell^{'s}$, means: $\in \{\ell, \ell +1, \ell +2, \ldots \}$.

\begin{theorem}
For a tree $T$ of order $n \geq 2$, its line graph $L(T)$ has a maximum clique as a chromatic core subgraph.
\end{theorem}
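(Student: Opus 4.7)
The plan is to identify $\chi(L(T))$ explicitly, exhibit an induced clique of that size in $L(T)$, and then invoke Theorem \ref{Thm-2.3} to conclude minimality. First, since every tree is bipartite, K\"onig's edge-colouring theorem gives the chromatic index $\chi'(T)=\Delta(T)$. Because the chromatic number of a line graph equals the chromatic index of the original graph, $\chi(L(T))=\chi'(T)=\Delta(T)$.

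Next, let $v\in V(T)$ with $d_T(v)=\Delta(T)$, and let $e_1,e_2,\ldots,e_{\Delta(T)}$ be the edges of $T$ incident to $v$. These edges, viewed as vertices of $L(T)$, are pairwise adjacent in $L(T)$ because any two of them share the endpoint $v$. So they induce a $K_{\Delta(T)}$ in $L(T)$ (note that the induced subgraph has no additional edges to account for: $L(T)$-adjacency means shared endpoint in $T$, which is automatic here). Equivalently, a maximum clique of $L(T)$ has order $\omega(L(T))=\Delta(T)$, using the fact that in the line graph of a triangle-free graph every clique arises from a star in the underlying graph.

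Finally, to see that this maximum clique is in fact a chromatic core subgraph, observe that any induced subgraph $H$ of $L(T)$ with $\chi(H)=\chi(L(T))=\Delta(T)$ must satisfy $si(H)\geq si(K_{\Delta(T)})=\Delta(T)+\tfrac{1}{2}\Delta(T)(\Delta(T)-1)$ by Theorem \ref{Thm-2.3}, since $K_{\Delta(T)}$ is the smallest graph of chromatic number $\Delta(T)$. The induced clique constructed in the previous paragraph attains this bound, so it is a chromatic core subgraph of $L(T)$.

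The only place where a subtlety might hide is in passing from ``$\chi(L(T))=\Delta(T)$'' to ``every maximum clique works''; but since all maximum cliques of $L(T)$ are isomorphic to $K_{\Delta(T)}$ and each achieves the minimum structor index guaranteed by Theorem \ref{Thm-2.3}, no further argument is needed. The main conceptual input is the classical equality $\chi'(T)=\Delta(T)$ for trees; everything else is a structural observation about line graphs together with a direct application of the already-established Theorem \ref{Thm-2.3}.
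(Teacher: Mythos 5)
Your proof is correct and follows essentially the same route as the paper: identify $\chi(L(T))=\Delta(T)$, observe that the edges at a maximum-degree vertex induce a $K_{\Delta(T)}$ which is a maximum clique, and conclude minimality from the fact that $K_m$ is the smallest graph with chromatic number $m$ (Theorem \ref{Thm-2.3}). Your write-up is in fact more explicit than the paper's, which argues via a case analysis on tree degree sequences and leaves the appeals to K\"onig's theorem and to Theorem \ref{Thm-2.3} implicit.
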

\begin{proof}~
Clearly, the degree sequences of a trees are characterised as either $(1,1)$ or $(1^{'s}, 2^{'s})$ or $(1^{'s}, \geq 3^{'s})$ or $(1^{'s}, 2^{'s}, \geq 3^{'s})$. Clearly for the possibilities $(1,1)$ or $(1^{'s}, 2^{'s})$ the line graph has a maximum cliques of order $\leq 2$. The corresponding clique is also the smallest induced subgraph with chromatic number equal to $\chi(L(T)) = 1~ or~ 2$. Therefore, it is a chromatic core subgraph of $L(T)$.

For the options $(1^{'s}, \geq 3^{'s})$ or $(1^{'s}, 2^{'s}, \geq 3^{'s})$ and without loss of generality, let $t = max\{\geq 3^{'s}\}$. Clearly the line graph has a maximum clique of order, $t$. The corresponding clique is also the smallest induced subgraph with chromatic number equal to $\chi(L(T)) = t$. Therefore, it is a chromatic core subgraph of $L(T)$.
\end{proof}

\section{Conclusion}

The paper introduced the notion of a chromatic core subgraph in respect of proper vertex colouring. The colours were all minimum parameter colours which is then called a chromatic colouring or in some work it is called, \textit{regular colouring}. The field of research by generalising the notion to edge colouring and other derivative colourings such as $J$-colouring, local colouring, dynamic colouring, co-colouring, grundy colouring, harmonious colouring, complete colouring, exact colouring, star colouring and others, offers a wide scope. The main application lies in large destruction, if possible, of a network carrying a defined technology capacity in accordance to a chosen colouring protocol whilst surviving with an induced subnetwork with equivalent technology capacity.

\subsection{New Direction of Research}

For the results in respect of strong and lexicographic products an alternative method of proof through induction on the structor indices is being investigated. The condition of simplicity of a graph is relaxed. Non-simplicity could result in the strong and lexicographic products through the new method of proof. Let the graph $G$ have vertex set $V(G) = \{v_1,v_2,v_3,\dots,v_n\}$. For example, note that a pseudo 2-path $P^\rho_2$ with multiple edges say, $v_1v_2$,~($t$ times) or $v_1v_2 = e_i,\ 1 \leq i \leq t$, has $\chi(P^\rho_2) = 2 = \chi(P_2)$. Therefore, a chromatic core subgraph of $P^\rho_2$ is $v_1e_iv_2,\ i \in \{1,2,3,\dots ,t\}$. Added to the aforesaid, it is known that, $\chi(P_{n\geq 2} \boxtimes P_{m \geq 2}) = 4 = \chi(C_{2n} \boxtimes P_{2m})$ with $K_4$ a chromatic core subgraph and, $\chi(C_{2n+1} \boxtimes C_{2m+1}) = 5$ with $K_5$ a chromatic core subgraph. These facts form the basis of the new direction of research.

\section*{Acknowledgement} 

The authors express their sincere gratitude to the anonymous referee whose valuable comments and theoretical guidance made it possible to finalise this research paper.

}

\begin{thebibliography}{99}
	
\bibitem{1} P. Ballen, Perfect graphs and the perfect graph theorems, \url{http://www.cis.upenn.edu/~pballen/brinkmann.pdf}

\bibitem{2} C. Berge, (1973). \textit{Graphs and hypergraphs}, CRC Press, Boca Raton.

\bibitem{3} J.A. Bondy and  U.S.R. Murty, (2008). \textit{Graph theory}, Springer, New York. 

\bibitem{4} M. Borowiecki, (1972). On chromatic number of products of two graphs, \textit{Colloq. Math.}, \textbf{25}, 49--52.

\bibitem{5} R. Hammack, W. Imrich and S. Klav\v{z}ar, (2011). \textit{Handbook of products graphs}, CRC Press, Boca Raton.

\bibitem{6} F. Harary, (2001). \textit{Graph theory}, Narosa Publ. House, New Delhi.

\bibitem{6a} S. Hedetniemi, (1966). Homomorphisms of graphs and automata, \textit{Technical Report 03105-44-T}, University of Michigan, USA.

\bibitem{7} J. Kok, C. Susanth, S.J. Kalayathankal, (2015). A Study on Linear Jaco Graphs, \textit{J. Inform. Math. Sci.}, \textbf{7}(2), 69--80.

\bibitem{8} J. Kok and S. Naduvath, An Essay on Comp\^{o}nent\u{a} Analysis of Graphs, \textit{Preprint, arXiv: 1705.02097v1 math.GM}.

\bibitem{9} J. Kok, S. Naduvath and M.K. Jamil, (2018). Rainbow Neighbourhood Number of Graphs, \textit{Natl. Acad. Sci. Lett.}, to appear.

\bibitem{10} D.B. West, (2001). \textit{Introduction to graph theory}, Prentice-Hall of India, New Delhi.

\end{thebibliography}
\end{document}